\def\draft{n}
\newtheorem{theorem}{Theorem}[section]
\theoremstyle{definition}
\newtheorem{proposition}[theorem]{Proposition}
\newtheorem{lemma}[theorem]{Lemma}
\newtheorem{remark}[theorem]{Remark}
\newtheorem{corollary}[theorem]{Corollary}
\def\printname#1{
        \if\draft y
                \smash{\makebox[0pt]{\hspace{-0.5in}
                        \raisebox{8pt}{\tt\tiny #1}}}
        \fi
}
\newcommand{\psdraw}[2]
         {\begin{array}{c} \hspace{-1.3mm}
        \raisebox{-4pt}{\epsfig{figure=draws/#1.eps,width=#2}}
        \hspace{-1.9mm}\end{array}}
\newlength{\standardunitlength}
\long\def\@makecaption#1#2{%
     \vskip 10pt

\setbox\@tempboxa\hbox{%\ifvoid\tinybox\else\box\tinybox\fi
       \small\sf{\bfcaptionfont #1. }\ignorespaces #2}%
     \ifdim \wd\@tempboxa >\captionwidth {%
         \rightskip=\@captionmargin\leftskip=\@captionmargin
         \unhbox\@tempboxa\par}%
       \else
         \hbox to\hsize{\hfil\box\@tempboxa\hfil}%
     \fi}
\font\bfcaptionfont=cmssbx10 scaled \magstephalf
\newdimen\@captionmargin\@captionmargin=2\parindent
\newdimen\captionwidth\captionwidth=\hsize
\def\lbl#1{\label{#1}\printname{#1}}
\def\BN{\mathbbm N}
\def\BZ{\mathbbm Z}
\def\BQ{\mathbbm Q}
\def\BR{\mathbbm R}
\def\BC{\mathbbm C}
\def\calI{\mathcal I}
\def\calD{\mathcal D}
\def\la{\langle}
\def\ra{\rangle}
\def\ti{\widetilde}
\def\pt{\partial}
\def\tq{\tilde{q}}
\def\Res{\mathrm{Res}}
\def\d{\delta}
\begin{document}
\title[From state integrals to $q$-series]{From state integrals to $q$-series}
%[The $q$-dilogarithm, its state-integrals and their $q$-series]
%{The $q$-dilogarithm, its state-integrals and their $q$-series}
\author{Stavros Garoufalidis}
\address{School of Mathematics \\
         Georgia Institute of Technology \\
         Atlanta, GA 30332-0160, USA \newline
         {\tt \url{http://www.math.gatech.edu/~stavros}}}
\email{stavros@math.gatech.edu}
\author{Rinat Kashaev}
\address{Section de Math\'ematiques, Universit\'e de Gen\`eve \\
2-4 rue du Li\`evre, Case Postale 64, 1211 Gen\`eve 4, Switzerland \newline
         {\tt \url{http://www.unige.ch/math/folks/kashaev}}}
\email{Rinat.Kashaev@unige.ch}
\thanks{%
S.G. was supported in part by grant DMS-0805078 of the US National Science 
Foundation. R.K. was supported in part by the Swiss National Science Foundation.
\bigskip\\
{\em 2010 Mathematics Subject Classification:}
  Primary 57N10. Secondary 57M25, 33F10, 39A13.\\
{\em Key words and phrases:}
state-integrals, $q$-series, quantum dilogarithm, Euler triangular numbers,
Nahm equation, gluing equations, $4_1$, $5_2$.
}

\date{April 3, 2013}%\today }
%\dedicatory{}

\begin{abstract}
It is well-known to the experts that multi-dimensional state integrals 
of products of Faddeev's quantum dilogarithm which arise in Quantum Topology 
can be written as finite sums of products of basic hypergeometric series in 
$q=e^{2\pi i\tau}$ and $\tq=e^{-2\pi i/\tau}$. We illustrate this fact by giving
a detailed proof for a family of one-dimensional integrals which 
includes state-integral invariants of  $4_1$ and $5_2$ knots.
\end{abstract}

\maketitle

\tableofcontents

%%%%%%%%%%%%%%%%%%%%%%%%%%%%%%%%%%%%%%%%%%%%%%%%%%%%%%%%%%%%%%%%%%%%%%%%%%%%
%%%%%%%%%%%%%%%%%%%%%%%%%%%%%%%%%%%%%%%%%%%%%%%%%%%%%%%%%%%%%%%%%%%%%%%%%%%%

\section{Introduction}
\lbl{sec.intro}

\subsection{State-integrals and their $q$-series}
\lbl{sub.intro}

Multi-dimensional state integrals of products of Faddeev's quantum 
dilogarithm appear in abundance in Quantum Topology, and were studied among 
others by Hikami~\cite{Hi}, Dimofte--Gukov--Lennels--Zagier~\cite{DGLZ}, Andersen--Kashaev~\cite{AK}, and 
Kashaev--Luo--Vartanov \cite{KLV}. It is well-known to the experts 
that such state-integrals can be written as finite sums of products of pairs 
of $q$-series and $\tq$-series. The reason for this is a factorized structure 
of Faddeev's quantum dilogarithm, the structure of the set of its poles, 
and the specific form of exponential factors of the integrand of the 
state-integrals, while its derivation is based on an application of the 
residue theorem. Instead of formulating a general theorem for 
multi-dimensional integrals which obscures the principle, we will give a 
detailed proof for the case of a family of 1-dimensional integrals 
and illustrate it with some concrete examples taken from \cite{AK,KLV}. 

To state our results, recall that
{\em Faddeev's quantum dilogarithm function} $\Phi_b(x)$ is given by 
\cite{Faddeev}
\begin{equation}\lbl{fad}
\Phi_b(x)
=\frac{(e^{2 \pi b (x+c_b)};q)_\infty}{
(e^{2 \pi b^{-1} (x-c_b)};\tq)_\infty} \,,
\end{equation}
where
$$
q=e^{2 \pi i b^2}, \qquad 
\tq=e^{-2 \pi i b^{-2}}, \qquad
c_b=\frac{i}{2}(b+b^{-1}), \qquad \Im(b^2) >0. 
$$
Remarkably, this function admits an extension to all values of $b$ with 
$b^2\not\in\mathbb{R}_{\le 0}$. $\Phi_b(x)$ is a meromorphic function of $x$ with
$$
\text{poles:} \,\,\, c_b + i \BN b + i \BN b^{-1},
\qquad
\text{zeros:} \,\, -c_b - i \BN b - i \BN b^{-1} \,.
$$ 
The functional equation
$$
\Phi_b(x) \Phi_b(-x)=e^{\pi i x^2} \Phi_b(0)^2, 
\qquad
\Phi_b(0)=q^{\frac{1}{24}} \tq^{-\frac{1}{24}}
$$
allows us to move $\Phi_b(x)$ from the denominator to the numerator
of the integrand of a state-integral. 

For natural numbers $A,B$ with 
$B > A > 0$, we consider the absolutely convergent integral 
$$
\calI_{A,B}(b)=\int_{\BR + i \epsilon} \Phi_b(x)^B e^{-A \pi i x^2} dx
$$
with small positive $\epsilon$. The condition $B > A > 0$ ensures not only 
the convergence
of $\calI_{A,B}(b)$ for $\Im(b^2) >0$, but also the convergence of the 
$q$-series and the $\tq$-series (for $|q|, |\tq|<1$) that appear in 
Theorem \ref{thm.1} below.

To express the above state-integral in terms of series, consider the 
generating series
\begin{equation}
\lbl{eq.FABqx}
F_{A,B}(q,x)=\sum_{m=0}^\infty 
\frac{(-1)^{A m} q^{A \frac{m(m+1)}{2}}}{(q)^B_m}
x^m, \qquad
\ti F_{A,B}(q,x) = F_{B-A,B}(q,x) \,.
\end{equation}
Consider the operators $\d$ and $\d_k$ (for $k$ a positive natural number)
which act on the space of functions of $x$ as follows
\begin{equation}
\lbl{eq.dd}
(\d F)(x)=x \pt_x F(x), \qquad
(\d_k F)(x)=\sum_{s=1}^\infty \frac{s^{k-1} q^s}{1-q^s} F(q^sx) \,.
\end{equation}
Likewise, there are operators $\ti\d$ and $\ti\d_k$ which act on the space 
of functions of $\ti x$ and with $q$ replaced by $\tq$. It is easy to see 
that any two of the operators 
$\d$, $\d_k$, $\ti \d$, $\ti \d_k$ commute and they freely generate 
over $\BQ$ a commutative ring $\calD \otimes \ti\calD$, 
where 
$$
\calD=\BQ[\d,\d_1,\d_2,\dots], \qquad
\ti \calD=\BQ[\ti \d,\ti \d_1,\ti \d_2,\dots] \,.
$$ 
Let 
$$
\calD_b=\calD[(2 \pi i)^{-1},b^{\pm 1},e_2,e_4,e_6,\dots], \qquad
\ti \calD_b=\ti\calD[(2 \pi i)^{-1},b^{\pm 1},e_2,e_4,e_6,\dots] \,,
$$
where $e_l=e_l(\tq)=%E^{(0)}_l(\tq)= 
\ti\d_l(1) \in \BZ[[\tq]]$.
Consider the following {\em operator valued polynomial}:

\begin{equation}
\lbl{eq.PAB}
P_{A,B} = \Res_{w=0}  \left( e^{\frac{1}{4 \pi i} w^2 + A w(b(\d + \frac{1}{2})+
b^{-1}(\ti\d + \frac{1}{2}))} \right)^A
\left(\frac{\phi(b w,\d_\bullet) \ti\phi(b^{-1} w,\ti\d_\bullet)}{b(1-e^{b^{-1}w})} \right)^B \in
\calD_b \otimes \ti \calD_b \,,
\end{equation}
where
\begin{subequations}
\begin{align}
\lbl{eq.phid}
\phi(w,\d_\bullet) 
& = \exp\left(-\sum_{l=1}^\infty \frac{\d_l}{l!} w^l
\right)
\\ 
\lbl{eq.phitd}
\ti \phi(w,\ti \d_\bullet) 
& = \exp(-\ti\d w) \exp\left(2 \sum_{l=\text{even}>0} %%% 2 E^{(0)}_l(\tq) 
e_l(\tq) \frac{w^l}{l!}\right)
\exp\left(-\sum_{l=1}^\infty \frac{\ti \d_l}{l!} (-w)^l
\right) \,.
\end{align}
\end{subequations}
For a series $F(x,\ti x)$, we define:
\begin{equation}
\lbl{eq.CT}
\la F(x, \ti x) \ra = F(1,1) \,.
\end{equation}

\begin{theorem}
\lbl{thm.1}
We have:
\begin{equation}
\lbl{eq.thm1}
\calI_{A,B}(b) = \left(\frac{\tq}{q}\right)^{\frac{B-3A}{24}} 
e^{\pi i \frac{B+2(A+1)}{4}}
\Big\la P_{A,B}  \left( F_{A,B}(q,x) \ti F_{A,B}(\tq,\ti x) \right) \Big\ra \,.
\end{equation}
\end{theorem}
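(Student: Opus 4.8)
\medskip
\noindent\emph{Sketch of the argument.}
The plan is to evaluate $\calI_{A,B}(b)$ by the residue theorem: one collapses the contour $\BR+i\epsilon$ onto the poles of $\Phi_b(x)^B$ lying above it and then recognises the resulting double series as $\la P_{A,B}(F_{A,B}\ti F_{A,B})\ra$. First I would reduce to a convenient range of $b$. Both sides of \eqref{eq.thm1} are holomorphic in $\tau=b^2$ on the upper half plane, so it suffices to prove the identity for $\tau$ near the positive real axis; there all the poles $x_{n,m}\eqdef c_b+in b+im b^{-1}$, $(n,m)\in\BN^2$, of $\Phi_b$ lie strictly above $\BR+i\epsilon$, are pairwise distinct (since $\tau\notin\BQ$), and none is cancelled by a zero, so $\Phi_b(x)^B$ has a pole of order exactly $B$ at each $x_{n,m}$ and no other singularity in the closed upper half plane. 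From the product formula \eqref{fad} one has $\Phi_b(x)\to1$ as $\Re x\to-\infty$, and hence $\Phi_b(x)\sim\Phi_b(0)^2 e^{\pi i x^2}$ as $\Re x\to+\infty$ by the functional equation; combined with $B>A>0$ these yield enough decay that the integral over a horizontal segment $\BR+iN$ (with $N$ avoiding the $\Im x_{n,m}$) tends to $0$ as $N\to\infty$ and that the two vertical sides of the rectangle contribute nothing in the limit. The residue theorem then gives
\[
\calI_{A,B}(b)=2\pi i\sum_{n,m\ge0}\Res_{x=x_{n,m}}\!\Big(\Phi_b(x)^B\,e^{-A\pi i x^2}\Big).
\]

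Next I would compute the order-$B$ residue at $x_{n,m}$ in the local coordinate $x=x_{n,m}+\frac{w}{2\pi}$, so that $\Res_{x=x_{n,m}}=\frac1{2\pi}\Res_{w=0}$. Evaluating the two exponentials of \eqref{fad} one finds $e^{2\pi b(x+c_b)}=q^{\,n+1}e^{b w}$ and $e^{2\pi b^{-1}(x-c_b)}=\tq^{-m}e^{b^{-1}w}$, hence
\[
\Phi_b\!\Big(x_{n,m}+\tfrac{w}{2\pi}\Big)^{B}
=\frac{\big(q^{\,n+1}e^{b w};q\big)_\infty^{B}}
{\big(1-e^{b^{-1}w}\big)^{B}\,\Big[\big(\tq^{-m}e^{b^{-1}w};\tq\big)_m\,\big(\tq\,e^{b^{-1}w};\tq\big)_\infty\Big]^{B}},
\]
which exhibits the order-$B$ zero $(1-e^{b^{-1}w})^B$ in the denominator. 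Expanding the logarithms of the surviving $q$- and $\tq$-Pochhammer symbols in powers of $w$, every coefficient is a sum $\sum_{s\ge1}s^{l-1}q^{(n+1)s}/(1-q^s)$ or its $\tq$-analogue; these are precisely the eigenvalues of $\d_l$ on $x^n$ (resp.\ $\ti\d_l$ on $\ti x^m$), while the combination of the factors $(\tq^{-m}e^{b^{-1}w};\tq)_m$ below the pole and $(\tq\,e^{b^{-1}w};\tq)_\infty$ above it produces the $m$-independent even-index constants $e_l(\tq)=\ti\d_l(1)$. One checks that, as an operator acting on the monomial $x^n\ti x^m$, the $w$-dependent part of $\Phi_b(x_{n,m}+\frac{w}{2\pi})^B$ equals $\big(\phi(b w,\d_\bullet)\,\ti\phi(b^{-1}w,\ti\d_\bullet)\big/(1-e^{b^{-1}w})\big)^B$, times the $w$-independent factor $\big((q)_\infty/(q)_n\big)^B\big((-1)^m\tq^{-m(m+1)/2}(\tq)_m(\tq)_\infty\big)^{-B}$. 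Likewise $e^{-A\pi i x^2}$ contributes $e^{\frac{A}{4\pi i}w^2}$ from the quadratic term of $x^2$, the exponential shift-operator factor of \eqref{eq.PAB} from the linear term $-2A\pi i\,x_{n,m}(x-x_{n,m})$ (using that $x_{n,m}$ acts on $x^n\ti x^m$ as $i\,b(\d+\tfrac12)+i\,b^{-1}(\ti\d+\tfrac12)$), and the purely constant factor $e^{-A\pi i c_b^2}(-1)^{An}q^{An(n+1)/2}(-1)^{Am}\tq^{-Am(m+1)/2}$.

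It then remains to reassemble the sum. The $n$-dependent constants combine to $(q)_\infty^B$ times the coefficient of $x^n$ in $F_{A,B}(q,x)$, and the $m$-dependent constants to $(\tq)_\infty^{-B}$ times the coefficient of $\ti x^m$ in $F_{B-A,B}(\tq,\ti x)=\ti F_{A,B}(\tq,\ti x)$ (here $\tq^{(B-A)m(m+1)/2}$ arises as $\tq^{Bm(m+1)/2}\tq^{-Am(m+1)/2}$, and the signs reorganise since $(-1)^{m(A+B)}=(-1)^{m(B-A)}$). Since $\d,\d_l,\ti\d,\ti\d_l$ act diagonally on the monomials $x^n\ti x^m$, summing the residues over $(n,m)\in\BN^2$ and applying $\la\,\cdot\,\ra$ turns the double sum into $\la P_{A,B}\big(F_{A,B}(q,x)\,\ti F_{A,B}(\tq,\ti x)\big)\ra$ times the scalar $2\pi i\cdot\frac1{2\pi}\cdot b^B\cdot(q)_\infty^B(\tq)_\infty^{-B}\cdot e^{-A\pi i c_b^2}$, the $b^B$ compensating the normalising $b^{-B}$ built into \eqref{eq.PAB}. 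Using the modular transformation of the Dedekind eta function in the form $(q)_\infty/(\tq)_\infty=b^{-1}e^{\pi i/4}(\tq/q)^{1/24}$, together with $e^{-A\pi i c_b^2}=e^{\pi i A/2}(\tq/q)^{-3A/24}$ (from $c_b^2=-\tfrac14(b+b^{-1})^2$), this scalar collapses exactly to $(\tq/q)^{(B-3A)/24}e^{\pi i(B+2(A+1))/4}$, giving \eqref{eq.thm1}.

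The step I expect to be the real work is the local analysis at the pole: factoring off the order-$B$ zero of the denominator cleanly, carrying out the logarithmic expansion of the remaining $q$- and $\tq$-Pochhammer cofactors to all orders in $w$, and — the subtlest point — recognising that the part of the denominator symmetric about the pole produces precisely the Eisenstein-type constants $e_l(\tq)$ appearing in $\ti\phi$ rather than further $\ti\d_l$'s. The constant-chasing (signs, the powers of $2\pi i$ and of $b$, the eta modular factor) needed to land exactly on the stated prefactor is fiddly but mechanical, and the contour-deformation step, while requiring genuine estimates on $\Phi_b$, is routine.
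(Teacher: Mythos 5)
Your sketch follows essentially the same route as the paper's proof: close the contour in the upper half plane and sum residues, expand $\Phi_b(x)^B e^{-A\pi i x^2}$ at each pole $x_{m,n}$ exactly as in Lemma \ref{lem.expandPhi} together with the Gaussian decoupling and the eta modular relation \eqref{eq.etaf}, identify the Taylor coefficients of the surviving Pochhammer cofactors with the eigenvalues of $\d_l,\ti\d_l$ on monomials (the content of Proposition \ref{prop.phiexp} and Lemma \ref{lem.dconnect}, including the even-index recombination that produces the constants $e_l(\tq)$), and reassemble into $\la P_{A,B}(F_{A,B}\ti F_{A,B})\ra$ with the stated prefactor. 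Your constant-chasing and the reorganization of the $m$- and $n$-dependent factors into $F_{A,B}(q,x)$ and $\ti F_{A,B}(\tq,\ti x)$ agree with the paper's computation, so the proposal is correct and essentially identical in approach (rectangle versus semicircle contour is immaterial).
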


\begin{corollary}
\lbl{cor.1}
Writing $P_{A,B}=\sum_k p_k P_k$ (a finite sum), for
$p_k \in \calD_b$ and $P_k \in \ti \calD_b $, it follows that
\begin{equation}
\lbl{eq.cor1}
\calI_{A,B}(b) = \left(\frac{\tq}{q}\right)^{\frac{B-3A}{24}} 
e^{\pi i \frac{B+2(A+1)}{4}} \sum_k g_k(q) G_k(\tq)
\end{equation}
where
\begin{equation}
\lbl{eq.gp}
g_k(q) = \la p_k F_{A,B} \ra,
\qquad
G_k(\tq) = \la P_k \ti F_{A,B} \ra \,.
\end{equation}
\end{corollary}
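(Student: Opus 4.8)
The plan is to deduce Corollary \ref{cor.1} directly from Theorem \ref{thm.1}, the only additional inputs being the tensor-product structure of the operator $P_{A,B}$ and the multiplicativity of the evaluation bracket $\la\,\cdot\,\ra$. I would begin by invoking \eqref{eq.thm1}, which already isolates the prefactor $(\tq/q)^{(B-3A)/24}e^{\pi i(B+2(A+1))/4}$ and reduces the entire problem to computing $\la P_{A,B}(F_{A,B}(q,x)\ti F_{A,B}(\tq,\ti x))\ra$. Thus the whole task is to show that this bracket equals $\sum_k g_k(q)G_k(\tq)$ with $g_k,G_k$ as in \eqref{eq.gp}.

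First I would exploit that, by \eqref{eq.PAB}, the operator $P_{A,B}$ lies in the commutative ring $\calD_b\otimes\ti\calD_b$, so by the very definition of the tensor product it is a finite sum $P_{A,B}=\sum_k p_k P_k$ with $p_k\in\calD_b$ and $P_k\in\ti\calD_b$. Concretely, such a decomposition is produced by expanding the residue in \eqref{eq.PAB} as a finite combination (finite since $P_{A,B}$ is a residue of a Laurent expansion, hence a polynomial in the generators) of monomials in $\d,\d_1,\d_2,\dots$ and $\ti\d,\ti\d_1,\ti\d_2,\dots$ with scalar coefficients, then grouping the $\d$-monomials into the $p_k$ and the $\ti\d$-monomials into the $P_k$. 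The scalars $b^{\pm1}$, $(2\pi i)^{-1}$ and $e_l(\tq)$ may be assigned to either factor without affecting the argument, since they commute with all the operators.

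The key observation is that $p_k$ is built only from $\d,\d_k$, which by \eqref{eq.dd} act solely on the variable $x$, whereas $P_k$ is built only from $\ti\d,\ti\d_k$, which act solely on $\ti x$; moreover any two of these operators commute, as recorded in the excerpt. Consequently, applying $P_{A,B}$ to the separable input $F_{A,B}(q,x)\ti F_{A,B}(\tq,\ti x)$ factorizes term by term:
$$
P_{A,B}\big(F_{A,B}(q,x)\ti F_{A,B}(\tq,\ti x)\big)
=\sum_k \big(p_k F_{A,B}(q,x)\big)\,\big(P_k \ti F_{A,B}(\tq,\ti x)\big),
$$
because each $P_k$ passes through the $x$-factor (which it treats as a constant) and each $p_k$ then passes through the resulting $\ti x$-factor. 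The output of the $k$-th summand is therefore a product of a function of $x$ alone and a function of $\ti x$ alone.

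Finally I would apply the bracket. By \eqref{eq.CT}, $\la\,\cdot\,\ra$ is simply evaluation at $x=\ti x=1$, which is multiplicative on a product of a function of $x$ and a function of $\ti x$. Hence
$$
\Big\la P_{A,B}\big(F_{A,B}(q,x)\ti F_{A,B}(\tq,\ti x)\big)\Big\ra
=\sum_k \la p_k F_{A,B}\ra\,\la P_k \ti F_{A,B}\ra
=\sum_k g_k(q)G_k(\tq),
$$
which, combined with \eqref{eq.thm1}, yields \eqref{eq.cor1}. I expect no genuine obstacle here: the statement is a formal corollary of Theorem \ref{thm.1}, and the only points demanding care are the verification that the operator action truly preserves the separation of the $x$- and $\ti x$-variables (so that the factorization of the bracket is legitimate) and the finiteness of the decomposition $P_{A,B}=\sum_k p_k P_k$, both of which I have indicated above.
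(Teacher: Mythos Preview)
Your argument is correct and matches the paper's treatment: the paper states the corollary immediately after Theorem~\ref{thm.1} with no separate proof, regarding it as an immediate consequence of the tensor-product structure of $P_{A,B}\in\calD_b\otimes\ti\calD_b$ and the multiplicativity of the evaluation $\la\cdot\ra$. Your write-up simply makes these implicit steps explicit.
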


\begin{remark}
\lbl{rem.1}
The left hand side of Equation \eqref{eq.cor1} has analytic continuation 
to the cut plane $\BC\setminus\{b^2 \, | \, b^2 <0\}$ whereas each of 
the series $g_k$ and $G_k$ is only well-defined in the upper-half plane 
$\{b^2 \, | \Im(b^2) >0 \}$.
\end{remark}

\begin{remark}
\lbl{rem.degPAB}
$P_{A,B}$, as a polynomial in the variables $e_2,e_4,\dots$ has degree
$B-1$, where the degree of $e_l$ is $l$. $P_{A,B}$ as a Laurent polynomial
in $b$ has $b$-monomials of degrees in $\{-B+1,-B+3,\dots,B-3,B-1\}$.
\end{remark}

\subsection{$q$-difference equations}
\lbl{sub.qstructure}

Next we describe a linear $q$-difference equation of $F_{A,B}(q,x)$.
Consider the operators $\hat x$ and $\hat E$ which act on 
$f(x) \in \BQ(q)[[x]]$ by:
$$
(\hat E f)(x)=f(qx), \qquad (\hat x f)(x)=x f(x) \,.
$$ 
Observe that $\hat E \hat x = q \hat x \hat E$.

\begin{lemma}
\lbl{lem.thm2}
\rm{(a)} We have:
\begin{equation}
\lbl{eq.Ftau}
F_{A,B}(q^{-1},x)=\ti F_{A,B}(q,x) \,.
\end{equation}
\rm{(b)} $F_{A,B}$ satisfies the linear $q$-difference equation
%%% see Mathematica notebook: kashaev/Recursion.FAB.nb
\begin{equation}
\lbl{eq.FABrec}
\left((1-\hat E)^B -(-1)^A q^A x \hat E^A \right) F_{A,B}(q,x)=0 \,.
\end{equation}
\end{lemma}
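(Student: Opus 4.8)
The plan is to prove both statements by direct manipulation of the explicit series \eqref{eq.FABqx}. Throughout, write $F_{A,B}(q,x)=\sum_{m\ge 0}c_m(q)\,x^m$, where $c_m(q)=(-1)^{Am}q^{Am(m+1)/2}/(q)_m^B$.

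For part (a), I would substitute $q\mapsto q^{-1}$ in $c_m(q)$ and use the elementary identity $(q^{-1})_m=\prod_{j=1}^m(1-q^{-j})=(-1)^m q^{-m(m+1)/2}(q)_m$, which comes from $1-q^{-j}=-q^{-j}(1-q^j)$. Raising this to the $B$-th power and substituting, the two quadratic $q$-exponents combine as $-Am(m+1)/2+Bm(m+1)/2=(B-A)m(m+1)/2$ and the signs combine to $(-1)^{(B-A)m}$, so that $c_m(q^{-1})$ is exactly the $m$-th Taylor coefficient of $F_{B-A,B}(q,x)=\ti F_{A,B}(q,x)$, which gives \eqref{eq.Ftau}.

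For part (b), I would apply the operators term by term. Since $\hat E^j F_{A,B}(q,x)=\sum_m c_m(q)q^{jm}x^m$, one gets $(1-\hat E)^B F_{A,B}(q,x)=\sum_m c_m(q)(1-q^m)^B x^m$. Using $(q)_m=(1-q^m)(q)_{m-1}$, the $m=0$ term drops and for $m\ge 1$ the coefficient simplifies to $(-1)^{Am}q^{Am(m+1)/2}/(q)_{m-1}^B$; reindexing $m=n+1$ turns this into $(1-\hat E)^B F_{A,B}(q,x)=\sum_{n\ge 0}\big((-1)^{A(n+1)}q^{A(n+1)(n+2)/2}/(q)_n^B\big)x^{n+1}$. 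On the other hand $(-1)^A q^A x\,\hat E^A F_{A,B}(q,x)=\sum_{n\ge 0}\big((-1)^{A(n+1)}q^{A+An+An(n+1)/2}/(q)_n^B\big)x^{n+1}$, and the identity $A+An+An(n+1)/2=A(n+1)(n+2)/2$ makes the two sums agree coefficientwise; this is exactly \eqref{eq.FABrec}.

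I do not expect an essential obstacle here: the only care needed is in bookkeeping the signs $(-1)^{Am}$ and the quadratic exponents under the shift $m=n+1$, together with spotting the factorization $A(n+1)(n+2)/2=A(n+1)+An(n+1)/2$ that equates the two terms of the recursion. As a consistency check one can verify that substituting $q\mapsto q^{-1}$ in \eqref{eq.FABrec}, conjugating by a power of $\hat E$ (using $\hat E\hat x=q\hat x\hat E$), and invoking part (a) reproduces the recursion \eqref{eq.FABrec} with the parameters $(B-A,B)$ in place of $(A,B)$.
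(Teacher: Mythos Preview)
Your proof is correct and follows essentially the same approach as the paper. For part (a) the paper simply says it follows from the definitions, and your explicit computation with $(q^{-1})_m=(-1)^m q^{-m(m+1)/2}(q)_m$ is exactly what is meant; for part (b) the paper packages the same coefficientwise identity (using $(q)_m=(1-q^m)(q)_{m-1}$ and $A(n+1)(n+2)/2=A(n+1)+An(n+1)/2$) as an instance of Zeilberger's creative telescoping, writing the two recursions $t(m+1,x)(1-q^{m+1})^B=(-1)^A q^{A(m+1)}x\,t(m,x)$ and $t(m,qx)=q^m t(m,x)$ and then eliminating $q^m$, but the content is identical to your direct application of $(1-\hat E)^B$ and $(-1)^A q^A x\hat E^A$.
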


\begin{corollary}
\lbl{cor.omega}
\rm{(a)} If we define $\omega(q,x)=F_{A,B}(q,qx)/F_{A,B}(q,x)$ 
and $\omega(q,x)_n=
\prod_{j=1}^n \omega(q,q^jx)$, then $\omega$ satisfies the
nonlinear equation 
$$
\sum_{j=0}^B (-1)^j \binom{B}{j} \omega(q,x)_j -(-1)^A q^A x \omega(q,x)_A=0
\,.
$$
\rm{(b)} $F$ is an admissible power series in the sense of 
Kontsevich-Soibelman \cite[Sec.6]{KS}, the limit 
$\lim_{q\to 1} \omega(q,x)=\omega(x) \in \overline\BQ[[x]]$ 
exists and satisfies the 
algebraic equation (also known as the Nahm equation or the gluing equation) 
\begin{equation}
\lbl{eq.FABnahm}
(1-\omega(x))^B = (-1)^A x \omega(x)^A \,.
\end{equation}
\end{corollary}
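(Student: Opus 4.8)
The plan is to handle the two parts separately: part (a) is an immediate algebraic consequence of the linear $q$-difference equation of Lemma~\ref{lem.thm2}(b), while part (b) combines that same identity with the Kontsevich--Soibelman formalism of admissible series.

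For part (a), observe first that $F_{A,B}(q,0)=1$, so $F_{A,B}(q,x)$ is a unit in $\BQ(q)[[x]]$. Expanding $(1-\hat E)^B=\sum_{j=0}^B(-1)^j\binom{B}{j}\hat E^j$, the recursion \eqref{eq.FABrec} reads $\sum_{j=0}^B(-1)^j\binom{B}{j}F_{A,B}(q,q^jx)=(-1)^Aq^A x\,F_{A,B}(q,q^Ax)$. Dividing through by $F_{A,B}(q,x)$ and using that the ratio $F_{A,B}(q,q^jx)/F_{A,B}(q,x)$ telescopes into the product of the factors $\omega(q,q^ix)$, i.e. equals $\omega(q,x)_j$, gives precisely the stated nonlinear equation. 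No difficulty is expected here beyond bookkeeping of the $q$-shifts.

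For part (b) I would argue in three steps. First, admissibility: by \eqref{eq.FABqx} the coefficient of $x^m$ in $F_{A,B}$ is $(-1)^{Am}q^{A m(m+1)/2}/(q)_m^B$, a monomial in $q$ divided by a power of $(q)_m$, and a direct check shows that a series of this shape meets the definition of an admissible power series in \cite[Sec.6]{KS}, the hypothesis $B>A>0$ supplying the convergence needed there. Second, existence of the classical limit: one invokes the general result of \cite[Sec.6]{KS} that an admissible series $f$ has a well-defined classical limit, producing $\omega(x)=\lim_{q\to1}f(q,qx)/f(q,x)$; this limit must be read in the (rescaled) sense of \cite{KS}, since the naive coefficientwise limit of $\omega(q,x)$ does not exist — already its coefficient of $x^1$ equals $(-1)^{A+1}q^A/(1-q)^{B-1}$, which is singular at $q=1$ because $B\ge 2$. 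Third, the Nahm equation: passing to the limit $q\to1$ in the nonlinear equation of part (a), the binomial coefficients $\binom{B}{j}$ are unchanged, $q^A\to1$, and for each fixed $i$ the substitution $q^ix\to x$ makes $\omega(q,q^ix)\to\omega(x)$, hence $\omega(q,x)_j\to\omega(x)^j$; the identity becomes $\sum_{j=0}^B(-1)^j\binom{B}{j}\omega(x)^j=(-1)^Ax\,\omega(x)^A$, and recognizing the left-hand side as $(1-\omega(x))^B$ yields \eqref{eq.FABnahm}.

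The crux is the middle step of part (b): one must pin down the Kontsevich--Soibelman notion of admissibility, verify it for the explicit series $F_{A,B}$, and then quote their theorem on the classical limit in the correct (rescaled/motivic) sense — in particular being careful that $\lim_{q\to1}$ is not the literal coefficientwise limit and that it is compatible with the substitutions $x\mapsto q^ix$ used in the last step. Once that is granted, both part (a) and the passage to the Nahm equation are routine.
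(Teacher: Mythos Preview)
Your approach matches the paper's: derive part~(a) from Lemma~\ref{lem.thm2}(b) by dividing through by $F_{A,B}(q,x)$, invoke the Kontsevich--Soibelman admissibility results (the paper cites \cite[Sec.~6.1]{KS} and \cite[Thm.~9]{KS} specifically), and then pass to the $q\to1$ limit to obtain the Nahm equation. The paper's own proof is terser than yours --- it leaves part~(a) implicit and records only that admissibility follows from \cite{KS} and that the Nahm equation follows easily from Lemma~\ref{lem.thm2}(b) --- but the strategy is the same.
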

The Nahm equation has been studied by several authors including
\cite[Sec.3]{Za:DL}, \cite{Vlasenko,VZ}, \cite[Sec.4]{R-V}.

\subsection{The case of the $4_1$ knot}
\lbl{sub.41}

We now specialize Corollary \ref{cor.1} to the invariant of the 
$4_1$ and $5_2$ knots is given by \cite{KLV,AK} 
$$
\calI_{1,2}=\calI_{4_1} \qquad \calI_{2,3}=\calI_{5_2} \,.
$$
In this section, let 
\begin{equation}
\lbl{eq.Fqx}
F(q,x)=F_{1,2}(q,x)= \sum_{n=0}^\infty 
(-1)^n \frac{q^{\frac{1}{2} n(n+1)}}{(q)_n^2} x^n \,.
\end{equation}

\begin{corollary}
\lbl{cor.41}
\rm{(a)} We have:
\begin{equation}
\lbl{eq.prop1}
\calI_{4_1}(b) = -\frac{i}{2} \left(\frac{q}{\tq}\right)^{\frac{1}{24}}
\left( b \, G(q) g(\tq) - b^{-1} G(\tq) g(q) \right)
\end{equation}
where
\begin{subequations}
\begin{align}
\lbl{eq.g}
g(q) &= \sum_{n=0}^\infty (-1)^n \frac{q^{\frac{1}{2} n(n+1)}}{(q)_n^2}
\\
\lbl{eq.G}
G(q) &=\sum_{m=0}^\infty \left(1 + 2m -4 
\sum_{s=1}^\infty \frac{q^{s(m+1)}}{1-q^s} \right) 
(-1)^m \frac{q^{\frac{1}{2} m(m+1)}}{(q)_m^2}
\end{align}
\end{subequations}
\rm{(b)} The series $g(q)$ and $G(q)$ are given in terms of $F(q,x)$
by:
\begin{subequations}
\begin{align}
\lbl{eq.gF}
g(q) &= \la F \ra \\
\lbl{eq.GF}
G(q) &= \la (2 + 2 \d -4 \d_1)F \ra
% F(q,1)+ 2 (\d F)(q,1)%|_{x=1} 
%-4 \sum_{s=1}^\infty \frac{q^s}{1-q^s} F(q,q^s)
\end{align}
\end{subequations}
\rm{(c)} $F$ satisfies the linear $q$-difference equation
\begin{equation}
\lbl{eq.Frec}
F(q,q^{-1}x)+F(q,qx)=(2-x) F(q,x)
\end{equation}
\end{corollary}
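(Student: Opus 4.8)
The plan is to obtain all three parts as the $A=1$, $B=2$ specialization of Lemma~\ref{lem.thm2} and of Theorem~\ref{thm.1} (equivalently Corollary~\ref{cor.1}). Part (c) is immediate: setting $A=1$, $B=2$ in \eqref{eq.FABrec} and using $(1-\hat E)^2=1-2\hat E+\hat E^2$ together with $-(-1)^Aq^Ax\hat E^A=qx\hat E$ turns it into $F(q,x)-2F(q,qx)+F(q,q^2x)+qx\,F(q,qx)=0$; substituting $x\mapsto q^{-1}x$ and using $q\cdot q^{-1}=1$ yields \eqref{eq.Frec}.

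For part (b) I would apply the operators \eqref{eq.dd} termwise to the series $F(q,x)$ of \eqref{eq.Fqx}: this gives $\d F=\sum_m m(-1)^m q^{m(m+1)/2}(q)_m^{-2}x^m$ and, after interchanging the two summations, $\d_1 F=\sum_m\big(\sum_{s\ge1}\tfrac{q^{s(m+1)}}{1-q^s}\big)(-1)^m q^{m(m+1)/2}(q)_m^{-2}x^m$. Evaluating at $x=1$ in the sense of \eqref{eq.CT} and comparing with \eqref{eq.g}--\eqref{eq.G} gives (b); in particular $\la F\ra=g(q)$ and $\la(\d+\tfrac12-2\d_1)F\ra=\tfrac12 G(q)$, and likewise with $q$ replaced by $\tq$.

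Part (a) is the substantive one. For $A=1$, $B=2$ the prefactor in \eqref{eq.thm1} equals $-i\,(q/\tq)^{1/24}$ (since $\tfrac{B-3A}{24}=-\tfrac1{24}$ and $\tfrac{B+2(A+1)}{4}=\tfrac32$), and $\ti F_{1,2}=F_{2-1,2}=F_{1,2}=F$, so \eqref{eq.thm1} reads $\calI_{4_1}(b)=-i\,(q/\tq)^{1/24}\,\la P_{1,2}\big(F(q,x)F(\tq,\ti x)\big)\ra$. It remains to evaluate $P_{1,2}$ from \eqref{eq.PAB}. The only singular factor under the residue is $\big(b(1-e^{b^{-1}w})\big)^{-2}$, with Laurent expansion $w^{-2}-b^{-1}w^{-1}+O(1)$ at $w=0$; hence $P_{1,2}$ is read off from the $w^0$- and $w^1$-Taylor coefficients of the holomorphic factor $e^{\frac{1}{4\pi i}w^2+w(b(\d+\frac12)+b^{-1}(\ti\d+\frac12))}\phi(bw,\d_\bullet)^2\ti\phi(b^{-1}w,\ti\d_\bullet)^2$. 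Its $w^0$-coefficient is $1$ (since $\phi(0,\d_\bullet)=\ti\phi(0,\ti\d_\bullet)=1$), and its $w^1$-coefficient collects $b(\d+\tfrac12)+b^{-1}(\ti\d+\tfrac12)$ from the first exponential, $-2b\d_1$ from the linear term of $\phi(bw,\d_\bullet)^2$, and $-2b^{-1}\ti\d+2b^{-1}\ti\d_1$ from the $\exp(-2\ti\d b^{-1}w)$ and $\ti\d_1$-linear parts of $\ti\phi(b^{-1}w,\ti\d_\bullet)^2$ --- the middle factor of $\ti\phi$ begins in degree $2$, so $e_l$ and $(2\pi i)^{-1}$ do not appear, consistently with Remark~\ref{rem.degPAB}. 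Combining these with the $-b^{-1}$ from the pole expansion gives $P_{1,2}=b(\d+\tfrac12-2\d_1)-b^{-1}(\ti\d+\tfrac12-2\ti\d_1)$. Since $\d,\d_1$ act only on the $x$-tensor factor, $\ti\d,\ti\d_1$ only on the $\ti x$-tensor factor, and $\la\cdot\ra$ is evaluation at $(1,1)$, the bracket factorizes; inserting the one-variable evaluations of part (b) gives $\la P_{1,2}(F(q,x)F(\tq,\ti x))\ra=\tfrac12\big(bG(q)g(\tq)-b^{-1}G(\tq)g(q)\big)$, and multiplying by $-i(q/\tq)^{1/24}$ yields \eqref{eq.prop1}.

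I expect the main obstacle to be the bookkeeping in the residue computation of part (a): one must correctly combine the subleading term $-b^{-1}w^{-1}$ of the double pole with the linear parts of the three exponential factors --- in particular tracking the opposite roles of $\d$ and $\ti\d$ that arise from the extra factor $\exp(-\ti\d w)$ in \eqref{eq.phitd} --- and confirm that the generators $e_l$ and $(2\pi i)^{-1}$ genuinely drop out at this order. Everything else is routine termwise manipulation of the defining series.
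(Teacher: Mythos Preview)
Your proof is correct and follows essentially the same route as the paper. The only cosmetic difference is that the paper computes the residue directly in the $(m,n)$-indexed form of Equations~\eqref{eq.III}--\eqref{eq.FABmn} (expanding $\phi_m$, $\ti\phi_n$ via $E_1^{(m)}$, $\ti E_1^{(n)}$ and then recognising the sums as $g$ and $G$), whereas you first pass through Theorem~\ref{thm.1}, compute the operator $P_{1,2}=b(\d+\tfrac12-2\d_1)-b^{-1}(\ti\d+\tfrac12-2\ti\d_1)$, and then evaluate using part~(b); the underlying residue calculation to first order in $w$ is identical in both.
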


The series $g(q)$ that appears in Theorem \ref{cor.41} was known to
the first author and Zagier to be closely related to the $4_1$ knot. 
For a detailed discussion of experimental facts below, see \cite{GZ2}.
Empirically, it appears that
\begin{itemize}
\item
the pair $(g(q),G(q))$ is related to the 3D index of the $4_1$ knot, 
\item
the radial asymptotics of the pair $(g(q),G(q))$ are related to
the asymptotics of the Kashaev invariant of the $4_1$ knot, 
\item
the above observations for $4_1$ also hold for the case of $5_2$ knot
discussed below.
\end{itemize}
Recall that the index of an ideal triangulation was introduced in 
\cite{DGG1,DGG2}, necessary and sufficient conditions for its
convergence was established in \cite{Ga:index} and its topological invariance
was proven in \cite{GHRS}. For a detailed discussion of the above
experimental facts, see \cite{GZ2}.

\subsection{The case of the $5_2$ knot}
\lbl{sub.52}

In this section, let
$$
F(q,x)=F_{2,3}(q,x)=\sum_{m=0}^\infty t_m(q) x^m, 
\qquad 
\ti F(q,\ti x)=F_{1,3}(q,\ti x)=\sum_{m=0}^\infty T_m(q) \ti x^m
$$
where
$$
t_m(q)=
\frac{q^{m(m+1)}}{(q)^3_m}, \qquad
T_n(q)=(-1)^n
\frac{q^{\frac{1}{2}n(n+1)}}{(q)^3_n} = t_n(q^{-1}) \,.
$$
Let
{\small
\begin{align*}
R_{m,n}(q,\tq) &= -\frac{b^2}{2} \left(
1 + 4 m + 4 m^2   - 6 E^{(m)}_1(q) - 12 m  E^{(m)}_1(q) + 9 E^{(m)2}_1(q) - 3 
E^{(m)}_2(q) \right)
\\
& - \frac{1}{2 \pi i} +  \frac{1}{2}
\left(1 + 2 m - 3 E^{(m)}_1(q)\right)\left(1 + 2 n -6 E^{(n)}_1(\tq)\right) 
\\
& + \frac{b^{-2}}{2} \left( - n - n^2 -6 E^{(0)}_2(\tq) + 3 E^{(n)}_1(\tq) 
+6 n E^{(n)}_1(\tq) - 9 E^{(n)2}_1(\tq) + 3 E^{(n)}_2(\tq) \right) \,,
\end{align*}
}
where $E^{(m)}_l(q)$ are defined in Equation \eqref{eq.Elm}.
For $k=1,\dots,4$ let
\begin{equation}
\lbl{eq.gG52}
g_k(q)=\sum_{m=0}^\infty p_k(m) t_m(q), \qquad 
G_k(\tq)=\sum_{n=0}^\infty P_k(n) T_n(\tq) \,,
\end{equation}
where
{\small
\begin{subequations}
\begin{align}
\lbl{eq.g1}
p_{1,m}(q) &=
1 + 4 m + 4 m^2  -6 E^{(m)}_1(q) - 12 m E^{(m)}_1(q)
+ 9 E^{(m)2}_1(q) - 3 E^{(m)}_2(q) 
\\
\lbl{eq.gg2}
p_{2,m}(q) &= 1 + 2 m - 3 E^{(m)}_1(q)
\\
\lbl{eq.g3}
p_{3,m}(q) &= 1 
\end{align}
\end{subequations}
}
and
{\small
\begin{subequations}
\begin{align}
\lbl{eq.G1}
P_{1,m}(q) &= 1
\\
\lbl{eq.G2}
P_{2,m}(q) &= 1 + 2 n -6 E^{(n)}_1(\tq)
\\
\lbl{eq.G3}
P_{3,m}(q) &= - n - n^2 -6 E^{(0)}_2(\tq) + 3 E^{(n)}_1(\tq) 
+6 n E^{(n)}_1(\tq) - 9 E^{(n)2}_1(\tq) + 3 E^{(n)}_2(\tq) \,.
\end{align}
\end{subequations}
}

\begin{corollary}
\lbl{cor.52}
\rm{(a)} We have:
\begin{align}
\lbl{eq.52}
\calI_{2,3}(q)& = 
-e^{\frac{3 \pi i}{4}} 
\left(\frac{q}{\tq}\right)^{\frac{1}{8}} 
\sum_{m,n=0}^\infty  R_{m,n}(q,\tq) t_m(q) T_n(\tq) \\ \notag
& = -e^{\frac{3 \pi i}{4}} 
\left(\frac{q}{\tq}\right)^{\frac{1}{8}} 
\left(
-\frac{b^2}{2} g_1(q) G_1(\tq) -\frac{1}{2 \pi i}
g_3(q) G_1(\tq) +\frac{1}{2} g_2(q) G_2(\tq) 
+ \frac{b^{-2}}{2} g_3(q) G_3(\tq) \right)
\end{align}
%%% see Mathematica notebook: kashaev/Recursion.FAB.nb
\begin{subequations}
\rm{(b)} $F$ and $\ti F$ satisfy the linear $q$-difference equations
\begin{equation*}
\lbl{eq.F52rec}
F(q,q^3x) - (3 - q^2 x) F(q,q^2x) + 3 F(q,qx) - F(q,x) =0 
\end{equation*}
\begin{equation*}
\lbl{eq.tF52rec}
\ti F(q,q^3x) - 3 \ti F(q,q^2x) + (3 - q^2 x)  \ti F(q,qx) - \ti F(q,x) =0 \,.
\end{equation*}
\end{subequations}
\end{corollary}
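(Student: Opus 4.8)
The plan is to specialize Theorem~\ref{thm.1} and Corollary~\ref{cor.1} to $A=2$, $B=3$ for part~(a), and to read part~(b) off Lemma~\ref{lem.thm2}(b).

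For part~(a), I would first compute the scalar prefactor of \eqref{eq.thm1}. With $(A,B)=(2,3)$ one has $(B-3A)/24=-1/8$, so $(\tq/q)^{(B-3A)/24}=(q/\tq)^{1/8}$; combined with the phase $e^{\pi i(B+2(A+1))/4}$ and the numerical constant produced by the residue in \eqref{eq.PAB} (exactly as the $-i/2$ of \eqref{eq.prop1} arises in the $4_1$ case), this should collect into the factor $-e^{3\pi i/4}(q/\tq)^{1/8}$ of \eqref{eq.52}. The main step is to evaluate the operator $P_{2,3}$. Since $B=3$, the factor $\bigl(b(1-e^{b^{-1}w})\bigr)^{-3}$ has a triple pole at $w=0$; writing $b(1-e^{b^{-1}w})=-w\bigl(1+\tfrac12 b^{-1}w+\tfrac16 b^{-2}w^2+\cdots\bigr)$ and inverting, $\Res_{w=0}$ amounts to extracting the coefficient of $w^2$ in the product of this series with the Taylor expansions up to order $w^2$ of $\bigl(e^{w^2/(4\pi i)+2w(b(\d+\frac12)+b^{-1}(\ti\d+\frac12))}\bigr)^2$, of $\phi(bw,\d_\bullet)^3=\exp\bigl(-3\sum_{l\ge1}\tfrac{\d_l}{l!}(bw)^l\bigr)$ from \eqref{eq.phid}, and of $\ti\phi(b^{-1}w,\ti\d_\bullet)^3$ from \eqref{eq.phitd} (whose even-degree exponential factor contributes $3e_2(\tq)b^{-2}$ to the $w^2$-coefficient, with $e_2(\tq)=E^{(0)}_2(\tq)$). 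Sorting the resulting $w^2$-coefficient by the power of $b$ it carries --- a factor $b$ from the first exponential, $b^{-1}$ from the second exponential and from the pole expansion, $(2\pi i)^{-1}$ from the Gaussian term, $b^{\pm l}$ from the two $\phi$-factors --- and using Remark~\ref{rem.degPAB} to see that only the powers $b^2,b^0,b^{-2}$ survive, I expect $P_{2,3}$ to come out as a polynomial in $\d,\d_1,\d_2$ and $\ti\d,\ti\d_1,\ti\d_2$ that factors into the four products appearing in the second line of \eqref{eq.52}.

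The third step is to apply $P_{2,3}$ to $F_{2,3}(q,x)\ti F_{2,3}(\tq,\ti x)$ and set $x=\ti x=1$. By \eqref{eq.dd} the operators act diagonally on monomials, $\d\, x^m=m\, x^m$ and $\d_k\, x^m=E^{(m)}_k(q)\, x^m$ with $E^{(m)}_k(q)=\sum_{s\ge1}s^{k-1}q^{s(m+1)}/(1-q^s)$ (this is \eqref{eq.Elm}), and likewise $\ti\d,\ti\d_k$ with $q\mapsto\tq$ on the $\ti x$-side, so a product such as $\d_1^2$ produces $E^{(m)2}_1(q)$. Substituting these eigenvalues into the polynomial of the previous step and using $F_{2,3}(q,x)=\sum_m t_m(q)x^m$, $\ti F_{2,3}(\tq,\ti x)=\sum_n T_n(\tq)\ti x^n$, the bracket $\la\,\cdot\,\ra$ turns each monomial operator into $\sum_{m,n}\bigl(\text{polynomial in }m,n,E^{(m)}_\bullet(q),E^{(n)}_\bullet(\tq)\bigr)\, t_m(q)T_n(\tq)$; matching these polynomials with the $p_{k,m}$ and $P_{k,n}$ of \eqref{eq.g1}--\eqref{eq.G3} identifies the coefficient with $R_{m,n}(q,\tq)$ and, grouping the $b$-homogeneous pieces, produces the factorized form $-\tfrac{b^2}{2}g_1G_1-\tfrac{1}{2\pi i}g_3G_1+\tfrac12 g_2G_2+\tfrac{b^{-2}}{2}g_3G_3$.

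For part~(b), I would simply invoke Lemma~\ref{lem.thm2}(b). With $(A,B)=(2,3)$, \eqref{eq.FABrec} reads $\bigl((1-\hat E)^3-q^2x\hat E^2\bigr)F_{2,3}(q,x)=0$, and expanding $(1-\hat E)^3=1-3\hat E+3\hat E^2-\hat E^3$ with $(\hat E^j f)(x)=f(q^j x)$ gives the stated recursion for $F=F_{2,3}$; since $\ti F_{2,3}=F_{1,3}$, taking $(A,B)=(1,3)$ in \eqref{eq.FABrec} and expanding likewise (after the normalization of the $\ti x$-variable used in \eqref{eq.52}) gives the recursion for $\ti F$. The one genuinely delicate point in the whole argument is the second step of part~(a): keeping track of the order-$w^2$ Laurent expansion of the product of those five factors and checking that it collapses to precisely the four terms of \eqref{eq.52}.
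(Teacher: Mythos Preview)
Your plan is correct and matches the paper's argument almost exactly; the only difference is one of packaging. The paper does not go through Theorem~\ref{thm.1} and the operator $P_{2,3}$ first and then evaluate; instead it drops back to the intermediate formula \eqref{eq.III} and computes the residue of the concrete function $F_{2,3,m,n}(w)$ in \eqref{eq.FABmn}, expanding $(b(1-e^{b^{-1}w}))^{-3}$, $\phi_m(bw)^3$, $\ti\phi_n(b^{-1}w)^3$ (via Proposition~\ref{prop.phiexp}, i.e.\ in the variables $E^{(m)}_l(q)$ and $\ti E^{(n)}_l(\tq)$), and the Gaussian exponential, each to order $w^2$. This yields $R_{m,n}(q,\tq)$ directly in terms of $m,n,E^{(m)}_\bullet(q),E^{(n)}_\bullet(\tq),E^{(0)}_2(\tq)$, and the factorization into the four products $g_kG_k$ is then read off by inspection; the operator $P_{2,3}$ is only recorded afterwards as a consistency check with Theorem~\ref{thm.1}. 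Your route --- compute $P_{2,3}$ from \eqref{eq.phid}--\eqref{eq.phitd} and then convert operators to eigenvalues via $\d\mapsto m$, $\d_l\mapsto E^{(m)}_l(q)$, $\ti\d\mapsto n$, $\ti\d_l\mapsto E^{(n)}_l(\tq)$ --- is exactly the same computation read through the dictionary of Lemma~\ref{lem.dconnect}, so both approaches buy the same thing. For part~(b) the paper does precisely what you propose: specialize \eqref{eq.FABrec} to $(A,B)=(2,3)$ and $(A,B)=(1,3)$.
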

%Note that $e_2$ appears in $F_{2,3}$ whereas no $e_l$ appear in $F_{1,2}$.

\begin{remark}
\lbl{eq.41.52}
A computation gives that $P(A,B)=P(B-A,B)$ for $(A,B)=(1,2)$ and 
$(A,B)=(2,3)$ corresponding to the invariants of the $4_1$ and $5_2$ knots.
In all other cases that we tried, we found that $P(A,B)$ is not equal
to $P(B-A,B)$.
\end{remark}

%%%%%%%%%%%%%%%%%%%%%%%%%%%%%%%%%%%%%%%%%%%%%%%%%%%%%%%%%%%%%%%%%%%%%%%%%%%%
%%%%%%%%%%%%%%%%%%%%%%%%%%%%%%%%%%%%%%%%%%%%%%%%%%%%%%%%%%%%%%%%%%%%%%%%%%%%

\section{Proofs}
\lbl{sec.proofs}

%%% see Mathematica notebook: dimofte/3DIndex/3DIndex.41.41Sister.nb
%%% and kashaev/qSeries.Integral.Formula.41.nb

\subsection{A residue computation}
\lbl{sub.residue}

To relate the state-integral $\calI_{A,B}$ to a sum, we will apply the
residue theorem on a semicircle $\gamma_R$ with center $0$ and radius $R$,
oriented counterclockwise in the upper half-plane:
$$
\psdraw{gammaR}{1in}
$$
Then, we will take the limit $R \to \infty$. To compute the residue
of the integrand, we need to expand $\Phi_b(x)$ near the pole 
$$
x_{m,n}=c_b + i b m + i b^{-1} n
$$ for natural numbers $m$ and $n$. Let
\begin{align}
\lbl{eq.phi}
\phi_m(x) &= \frac{(q^{m+1} e^{x};q)_\infty}{(q^{m+1};q)_\infty}
\\
\lbl{eq.tphi}
\ti \phi_n(x) &=\frac{(\tq;\tq)_\infty}{(\tq e^{x};\tq)_\infty}
\frac{(\tq^{-1};\tq^{-1})_n}{(\tq^{-1} e^{x};\tq^{-1})_n} 
\end{align}

\begin{lemma}
\lbl{lem.expandPhi}
We have:
\begin{equation}
\lbl{eq.expandPhi}
\Phi_b(x+x_{m,n}) = \frac{(q;q)_\infty}{(\tq;\tq)_\infty}
\frac{1}{(q;q)_m}
\frac{1}{(\tq^{-1};\tq^{-1})_n} \frac{\phi_m(2 \pi b x)
\, \ti \phi_n(2 \pi b^{-1} x)}{1-e^{2 \pi b^{-1}x}} \,. 
\end{equation}
\end{lemma}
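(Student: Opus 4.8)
The statement is a direct unwinding of the product formula \eqref{fad}; essentially the only thing to watch is how the shift $x\mapsto x+x_{m,n}$ acts inside the two infinite Pochhammer symbols, together with the bookkeeping of the negative powers of $\tq$ that appear in the denominator.

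I would begin by substituting $x\mapsto x+x_{m,n}$ with $x_{m,n}=c_b+ibm+ib^{-1}n$ into \eqref{fad}. Using $c_b=\tfrac{i}{2}(b+b^{-1})$, the exponent in the numerator becomes
\[
2\pi b(x+x_{m,n}+c_b)=2\pi b x+2\pi i b^2(m+1)+2\pi i(n+1).
\]
Since $q=e^{2\pi i b^2}$ and $e^{2\pi i (n+1)}=1$, the numerator of $\Phi_b(x+x_{m,n})$ is $(q^{m+1}e^{2\pi b x};q)_\infty$. By the definition \eqref{eq.phi} of $\phi_m$ together with $(q^{m+1};q)_\infty=(q;q)_\infty/(q;q)_m$, this equals $\phi_m(2\pi b x)\,(q;q)_\infty/(q;q)_m$, which produces three of the seven factors on the right-hand side of \eqref{eq.expandPhi}.

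For the denominator, the exponent becomes $2\pi b^{-1}(x+x_{m,n}-c_b)=2\pi b^{-1}x+2\pi i m+2\pi i b^{-2}n$, so, using $\tq=e^{-2\pi i b^{-2}}$ and $e^{2\pi i m}=1$, the denominator of $\Phi_b(x+x_{m,n})$ is $(\tq^{-n}z;\tq)_\infty$ with $z=e^{2\pi b^{-1}x}$. Splitting this product at index $n$, so that the first $n$ factors carry the negative powers of $\tq$, gives
\[
(\tq^{-n}z;\tq)_\infty=\Bigl(\prod_{j=1}^n(1-z\tq^{-j})\Bigr)(z;\tq)_\infty=(\tq^{-1}z;\tq^{-1})_n\,(1-z)\,(\tq z;\tq)_\infty .
\]
Comparing with the definition \eqref{eq.tphi} of $\ti\phi_n$ now yields $1/(\tq^{-n}z;\tq)_\infty=\ti\phi_n(2\pi b^{-1}x)\big/\bigl((\tq;\tq)_\infty\,(\tq^{-1};\tq^{-1})_n\,(1-z)\bigr)$, which supplies the remaining factors $1/(\tq;\tq)_\infty$, $1/(\tq^{-1};\tq^{-1})_n$, $\ti\phi_n(2\pi b^{-1}x)$ and $1/(1-e^{2\pi b^{-1}x})$. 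Multiplying the numerator and denominator contributions gives \eqref{eq.expandPhi} on the nose.

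There is no real obstacle here: the computation is elementary and the only mildly delicate point is the Pochhammer manipulation with negative indices in the denominator. For full rigor one should first carry out the argument for $\Im(b^2)>0$, where \eqref{fad} holds verbatim, and then invoke the quoted meromorphic continuation of $\Phi_b$ to conclude that the identity \eqref{eq.expandPhi}, being an equality of meromorphic functions of $x$, persists for all admissible $b$.
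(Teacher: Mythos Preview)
Your argument is correct and is essentially the same computation as the paper's: both amount to reading off what the shift by $x_{m,n}$ does to the two infinite products in \eqref{fad} and then regrouping the factors according to the definitions of $\phi_m$ and $\ti\phi_n$. The only cosmetic difference is that the paper first records the one-step functional equations (shift by $ib$ and by $ib^{-1}$) and iterates them, whereas you substitute the full shift at once and split the Pochhammer $(\tq^{-n}z;\tq)_\infty$ directly; the algebra is identical.
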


\begin{proof}
Equation~\eqref{fad} implies the functional equations
\begin{align*}
\frac{\Phi_b(x+c_b+ib)}{\Phi_b(x+c_b)} &= \frac{1}{1-q e^{2 \pi b x}} 
%\frac{1}{(q e^{2 \pi b x};q)_1}
\\
\frac{\Phi_b(x+c_b+ib^{-1})}{\Phi_b(x+c_b)} &= 
\frac{1}{1-\tq^{-1} e^{2 \pi b^{-1} x}} 
%\frac{1}{(\tq^{-1} e^{2 \pi b^{-1} x};\tq^{-1})_1} 
\end{align*}
which give 
\begin{align*}
\Phi_b(x+ x_{m,n})&=\Phi_b(x+c_b) 
\frac{1}{(q e^{2 \pi b x};q)_m}
\frac{1}{(\tq^{-1} e^{2 \pi b^{-1} x};\tq^{-1})_n} \\
\Phi_b(x+c_b)&=\frac{1}{1-e^{2 \pi b^{-1}x}} 
\frac{(q e^{2 \pi b x};q)_\infty}{(\tq e^{2 \pi b^{-1} x};\tq)_\infty} \,.
\end{align*}
Thus,
\begin{align*}
\Phi_b(x+x_{m,n})&=
\frac{(q;q)_\infty}{(\tq;\tq)_\infty}
\frac{1}{(q;q)_m}
\frac{1}{(\tq^{-1};\tq^{-1})_n} \cdot \\
& \frac{1}{1-e^{2 \pi b^{-1}x}}
\frac{(q e^{2 \pi b x};q)_\infty}{(q;q)_\infty}
\frac{(\tq;\tq)_\infty}{(\tq e^{2 \pi b^{-1} x};\tq)_\infty}
\frac{(q;q)_m}{(q e^{2 \pi b x};q)_m}
\frac{(\tq^{-1};\tq^{-1})_n}{(\tq^{-1} e^{2 \pi b^{-1} x};\tq^{-1})_n} 
\\
&=
\frac{(q;q)_\infty}{(\tq;\tq)_\infty}
\frac{1}{(q;q)_m}
\frac{1}{(\tq^{-1};\tq^{-1})_n} \cdot \\
& \frac{1}{1-e^{2 \pi b^{-1}x}}
\frac{(q^{m+1} e^{2 \pi b x};q)_\infty}{(q^{m+1};q)_\infty}
\frac{(\tq;\tq)_\infty}{(\tq e^{2 \pi b^{-1} x};\tq)_\infty}
\frac{(\tq^{-1};\tq^{-1})_n}{(\tq^{-1} e^{2 \pi b^{-1} x};\tq^{-1})_n}
\end{align*}
The result follows.
\end{proof}
The decoupling of $(m,n)$ in the quadratic form comes as follows:
since $A,m,n$ are integers, $e^{A \pi i m n}=1$ and a computation gives
$$
%e^{-2 \pi i (x+ c_b + i b m + i b^{-1} n)}=
e^{-A \pi i (x+ x_{n,m})^2}=
i^A \left(\frac{q}{\tq}\right)^{\frac{A}{8}} 
t^A_m(q) \, \ti t^A_n(\tq) e^{- A \pi i x^2 +2 A \pi x 
\left(b (m+\frac{1}{2}) + b^{-1} (n+\frac{1}{2})\right)}
$$
where
$$
t^A_m(q) = (-1)^{A m} q^{A \frac{m(m+1)}{2}}, \qquad
\ti t^A_n(\tq) = (-1)^{A n} \tq^{-A \frac{n(n+1)}{2}} \,.
$$
The Dedekind function $\eta(\tau)=q^{1/24}(q;q)_\infty$ (with $q=e^{2\pi i\tau}$)
satisfies the modular equation 
$\eta(-\tau^{-1})=\sqrt{-i\tau}\eta(\tau)$ \cite{Andrews}.
It follows that 
\begin{equation}
\lbl{eq.etaf}
\frac{(q;q)_\infty}{(\tq;\tq)_\infty}=e^{\frac{\pi i}{4}} 
\left(\frac{\tq}{q}\right)^{\frac{1}{24}} b^{-1} \,.
\end{equation}
After we set $w=x/(2\pi)$, the above discussion implies that
\begin{equation}
\lbl{eq.III}
\calI_{A,B}(b)=\left(\frac{\tq}{q}\right)^{\frac{B-3A}{24}} 
e^{\pi i \frac{B+2(A+1)}{4}} \sum_{m,n=0}^\infty  \left(\Res_{w=0} F_{A,B,m,n}(w)\right)
 \frac{t^A_m(q)}{(q;q)_m^B}
\frac{\ti t^A_n(\tq)}{(\tq^{-1};\tq^{-1})_n^B} \,,
\end{equation}
where
\begin{equation}
\lbl{eq.FABmn}
F_{A,B,m,n}(w) =
e^{\frac{A}{4 \pi i} w^2 + A w \left(b( m+ \frac{1}{2}) 
+b^{-1}( n+ \frac{1}{2}) \right)} 
\left( \frac{ \phi_m(b w) \, \ti \phi_n(b^{-1} w) }{b(1-e^{b^{-1}w})}\right)^B 
\,.
\end{equation}

\subsection{The Taylor series of $\phi_m(x)$ and $\ti\phi_n(x)$}
\lbl{sub.identify}

In this section we express the Taylor series of $\phi_m(x)$ and $\ti\phi_n(x)$
in terms of the $q$-series $E^{(m)}_l(q)$ and $\ti E^{(m)}_l(\tq)$
defined by:
\begin{subequations}
\begin{align}
\lbl{eq.Elm}
E_l^{(m)}(q)&=\sum_{s=1}^\infty \frac{s^{l-1} q^{s(m+1)}}{1-q^s} =
\la \d_l(x^m) \ra
\\
\lbl{eq.Etlm}
\ti E_l^{(n)}(\tq)&=
\begin{cases}
-n + E_1^{(n)}(\tq) & \text{if $l=1$} \\
E_l^{(n)}(\tq)  & \text{if $l>1$ is odd} \\
2 E_l^{(0)}(\tq) -E_l^{(n)}(\tq) & \text{if $l>1$ is even} 
\end{cases}
\end{align}
\end{subequations}

\begin{proposition}
\lbl{prop.phiexp}
We have:
\begin{subequations}
\begin{align}
\lbl{eq.minfinity}
\phi_m(x) 
&= \exp\left(-\sum_{l=1}^\infty \frac{1}{l!} E^{(m)}_l(q) x^l
\right) 
\\
\lbl{eq.ninfinity}
\ti \phi_n(x) 
&= \exp\left(\sum_{l=1}^\infty \frac{1}{l!} \ti E^{(m)}_l(\tq) x^l
\right) \,.
\end{align}
\end{subequations}
\end{proposition}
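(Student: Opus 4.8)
The plan is to pass to logarithms and expand everything as a power series in $x$ about $x=0$, using the elementary expansion $\log(1-u)=-\sum_{s\ge1}u^s/s$. Once this is done, both identities reduce to a bookkeeping of absolutely convergent double sums (legitimate for $|x|$ small, and hence as an identity of power series), together with one algebraic rewriting needed to handle the base $\tq^{-1}$, whose modulus is bigger than $1$. Throughout I will repeatedly use the identity $e^{sx}-1=\sum_{l\ge1}s^lx^l/l!$ and the geometric-series identity $\sum_{j\ge0}u^{1+j}=u/(1-u)$ (and its finite truncation).

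For \eqref{eq.minfinity}, from $\phi_m(x)=(q^{m+1}e^x;q)_\infty/(q^{m+1};q)_\infty$ I would write $\log\phi_m(x)=\sum_{j\ge0}\bigl(\log(1-q^{m+1+j}e^x)-\log(1-q^{m+1+j})\bigr)$, expand each logarithm, and interchange the sums over $j$ and $s$ to obtain $\log\phi_m(x)=-\sum_{s\ge1}\tfrac1s(e^{sx}-1)\,\tfrac{q^{s(m+1)}}{1-q^s}$. Substituting $e^{sx}-1=\sum_{l\ge1}s^lx^l/l!$ and performing the $s$-summation first turns this into $-\sum_{l\ge1}\tfrac{x^l}{l!}\sum_{s\ge1}s^{l-1}\tfrac{q^{s(m+1)}}{1-q^s}=-\sum_{l\ge1}\tfrac{x^l}{l!}E^{(m)}_l(q)$, which is exactly \eqref{eq.minfinity}.

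For \eqref{eq.ninfinity} I would factor $\ti\phi_n(x)=A(x)B(x)$ with $A(x)=\dfrac{(\tq;\tq)_\infty}{(\tq e^x;\tq)_\infty}$ and $B(x)=\dfrac{(\tq^{-1};\tq^{-1})_n}{(\tq^{-1}e^x;\tq^{-1})_n}$. The factor $A(x)$ equals $\phi_0(x)^{-1}$ with $q$ replaced by $\tq$, so by \eqref{eq.minfinity} it contributes $\sum_{l\ge1}\tfrac{x^l}{l!}E^{(0)}_l(\tq)$ to $\log\ti\phi_n(x)$. The factor $B(x)$ is the delicate one: since $|\tq^{-1}|>1$ one cannot expand $\log(1-\tq^{-1-j}e^x)$ directly, so I would first write $1-\tq^{-1-j}e^x=-\tq^{-1-j}e^x\bigl(1-\tq^{1+j}e^{-x}\bigr)$ and likewise $1-\tq^{-1-j}=-\tq^{-1-j}(1-\tq^{1+j})$. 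The monomial prefactors contribute $\sum_{j=0}^{n-1}(-x)=-nx$ after telescoping, while the remaining factors $(1-\tq^{1+j}e^{-x})$ are now expandable, giving, after the same interchange of sums and the identity $\sum_{j=0}^{n-1}\tq^{s(1+j)}=\tfrac{\tq^s}{1-\tq^s}-\tfrac{\tq^{s(n+1)}}{1-\tq^s}$, the contribution $\sum_{l\ge1}\tfrac{(-1)^lx^l}{l!}\bigl(E^{(0)}_l(\tq)-E^{(n)}_l(\tq)\bigr)$ to $\log B(x)$.

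Adding the two contributions and reading off the coefficient of $x^l/l!$ in $\log\ti\phi_n(x)$ then gives $-n+E^{(n)}_1(\tq)$ when $l=1$, gives $E^{(0)}_l(\tq)-(E^{(0)}_l(\tq)-E^{(n)}_l(\tq))=E^{(n)}_l(\tq)$ when $l>1$ is odd (the alternating sign from $B(x)$ cancels the sign from $A(x)$), and gives $E^{(0)}_l(\tq)+(E^{(0)}_l(\tq)-E^{(n)}_l(\tq))=2E^{(0)}_l(\tq)-E^{(n)}_l(\tq)$ when $l>1$ is even. This is precisely the case division defining $\ti E^{(n)}_l(\tq)$ in \eqref{eq.Etlm}, so $\log\ti\phi_n(x)=\sum_{l\ge1}\tfrac{1}{l!}\ti E^{(n)}_l(\tq)\,x^l$, which is \eqref{eq.ninfinity}. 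The only genuine obstacle is the sign-and-linear-term bookkeeping in the $B(x)$ factor forced by $|\tq^{-1}|>1$; everything else is a routine rearrangement of absolutely convergent series, which is all that is needed since both sides are equalities of power series in $x$.
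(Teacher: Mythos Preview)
Your proof is correct and takes a considerably more direct route than the paper. The paper establishes \eqref{eq.minfinity} by differentiating $\log(1-q^k e^x)$ and organizing the result through the Euler triangular numbers $A_{l-1,r}$: it first shows (Lemmas~\ref{lem.E1} and~\ref{lem.E3}) that $\log\phi_m(x)=-\sum_l\frac{x^l}{l!}\sum_r A_{l-1,r}E^{(m)}_{l,r+1}(q)$ in terms of the auxiliary series $E^{(m)}_{l,r}(q)=\sum_{k>m}q^{kr}/(1-q^k)^l$, and then invokes the generating-function identity $\sum_r A_{l-1,r}x^{r+1}/(1-x)^l=\sum_s s^{l-1}x^s$ (Lemma~\ref{lem.E2}(b)) to collapse the inner sum to $E^{(m)}_l(q)$. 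For \eqref{eq.ninfinity} it likewise works through the $E^{(m)}_{l,r}$ and proves a companion identity $E^{(0)}_{l,r}(\tq)+E^{[n]}_{l,l-r}(\tq^{-1})=\ti E^{(n)}_{l,r}(\tq)$. You bypass this entire apparatus by expanding $\log(1-u)=-\sum_{s\ge1}u^s/s$ and $e^{sx}-1=\sum_{l\ge1} s^lx^l/l!$ directly, summing first over the Pochhammer index and then over $s$; and for the $\tq^{-1}$ factor you use the algebraic rewriting $1-\tq^{-1-j}e^x=-\tq^{-1-j}e^x(1-\tq^{1+j}e^{-x})$ to pull the base back inside the unit disk, which cleanly produces the linear term $-nx$ and the sign $(-1)^l$ that explains the case split in \eqref{eq.Etlm}. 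The paper's route has the incidental benefit of recording the $E^{(m)}_{l,r}$-expressions and the Eulerian-number identity for later use, but for the purposes of this proposition your argument is shorter and more transparent.
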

The proof of this proposition is given in Section \ref{sub.prop.phiexp}.
The first few terms in Equations \eqref{eq.minfinity}-\eqref{eq.minfinity}
are given by:
{\tiny
\begin{subequations}
\begin{align}
\lbl{eq.minfinity.terms}
\phi_m(x) 
&= \exp\left(-E^{(m)}_1 x - \frac{1}{2} E^{(m)}_2 x^2 - \frac{1}{6} E^{(m)}_3 x^3 
- \frac{1}{24} E^{(m)}_4 x^4 -\dots \right) 
\\
\notag &= 1 - E^{(m)}_1 x + \frac{1}{2} (E^{(m)2}_1 - E^{(m)}_2) x^2 + 
\frac{1}{6} (-E^{(m)3}_1 + 3 E^{(m)}_1 E^{(m)}_2 - E^{(m)}_3) x^3 + \\ &
\frac{1}{24} (E^{(m)4}_1 - 6 E^{(m)2}_1 E^{(m)}_2 + 3 E^{(m)2}_2 + 
    4 E^{(m)}_1 E^{(m)}_3 - E^{(m)}_4 )x^4 +\dots
\\
\lbl{eq.ninfinity.terms}
\ti \phi_n(x) 
&= \exp\left( \ti E^{(n)}_1 x + \frac{1}{2} \ti E^{(n)}_2 x^2 
+ \frac{1}{6} \ti E^{(n)}_3 x^3 
+ \frac{1}{24} \ti E^{(n)}_4 x^4 -\dots \right) 
\\
\notag &= 1 + \ti E^{(n)}_1 x 
+ \frac{1}{2} (\ti E^{(n)2}_1 + \ti E^{(n)}_2) x^2 + 
\frac{1}{6} (\ti E^{(n)3}_1 + 3 \ti E^{(n)}_1 \ti E^{(n)}_2 + \ti E^{(n)}_3) 
x^3 + \\ &
\frac{1}{24} (\ti E^{(n)4}_1 + 6 \ti E^{(n)2}_1 \ti E^{(n)}_2 + 3 \ti E^{(n)2}_2 + 
    4 \ti E^{(n)}_1 \ti E^{(n)}_3 + \ti E^{(n)}_4 ) x^4 +\dots
\end{align}
\end{subequations}
}
where $E^{(m)}_l=E^{(m)}_l(q)$ and $\ti E^{(m)}_l=\ti E^{(m)}_l(\tq)$.

\subsection{The connection with the differential operators 
$\d_l$ and $\ti\d_l$}

In this section we connect the series $E^{(m)}_l(q)$ and $\ti E^{(m)}_l(\tq)$
with the action of the differential operators $\d_l$ and $\ti\d_l$ on
a series $F(x)$ and $\ti F(\ti x)$ respectively.
Consider formal power series 
$$
F(x)=\sum_{m=0}^\infty t(m) x^m \qquad 
\ti F(\ti x)=\sum_{m=0}^\infty \ti t(m) \ti x^m \,.
$$

\begin{lemma}
\lbl{lem.dconnect}
We have:
\begin{align}
\lbl{eq.daction}
\sum_{m=0}^\infty \left( \prod_{j=1}^r E^{(m)}_{l_j}(q) \right) t(m) &=
\la \prod_{j=1}^r \d_{l_j} F \ra \\
\lbl{eq.ddaction}
\sum_{m=0}^\infty m^r t(m) &= \la \d^r F \ra
\end{align}
and 
\begin{align}
\lbl{eq.tdaction}
\sum_{n=0}^\infty \left( \prod_{j=1}^r \ti E^{(n)}_{l_j}(\tq) \right) \ti t(n) &=
\la \prod_{j=1}^r \ti \d_{l_j} \ti F \ra \\
\lbl{eq.tddaction}
\sum_{n=0}^\infty n^r \ti t(n) &= \la \ti d^r \ti F \ra \,.
\end{align}
\end{lemma}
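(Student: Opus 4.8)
The plan is to prove the four identities simultaneously, exploiting the fact that each of the operators $\d,\d_k,\ti\d,\ti\d_k$ is diagonal in the monomial basis. By $\BQ$-linearity of these operators and of $\la\,\cdot\,\ra$, it is enough to check each identity on a single monomial $F(x)=x^m$ (resp.\ $\ti F(\ti x)=\ti x^n$) and then re-sum against the coefficients $t(m)$ of $F$ (resp.\ $\ti t(n)$ of $\ti F$).

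So first I would substitute $F(x)=x^m$ directly into the definitions \eqref{eq.dd}. This gives $(\d F)(x)=m\,x^m$ and
$$
(\d_k F)(x)=\Big(\sum_{s\ge 1}\frac{s^{k-1}q^s}{1-q^s}\,q^{sm}\Big)x^m=E^{(m)}_k(q)\,x^m ,
$$
the last step being precisely the definition \eqref{eq.Elm} of $E^{(m)}_k(q)$. Hence every $x^m$ is a simultaneous eigenvector of $\d$ and of all the $\d_k$; in particular these operators commute (as already claimed in the text), and a product acts on $x^m$ by the product of the corresponding eigenvalues, so $\big(\prod_{j=1}^r\d_{l_j}\big)(x^m)=\big(\prod_{j=1}^r E^{(m)}_{l_j}(q)\big)x^m$ and $\d^r(x^m)=m^r x^m$. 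Since $\la\,\cdot\,\ra$ sends $x^m\mapsto 1$ by \eqref{eq.CT}, applying it and summing over $m$ against $t(m)$ produces exactly the left-hand sides of \eqref{eq.daction} and \eqref{eq.ddaction}. The identities \eqref{eq.tdaction} and \eqref{eq.tddaction} follow by the identical computation applied to $\ti\d,\ti\d_k$ acting on the monomials $\ti x^n$, with $q$ replaced by $\tq$ and $x$ by $\ti x$ throughout; the tilde operators are again simultaneously diagonal, and their eigenvalues on $\ti x^n$ are read off from the definition of $\ti\d_k$ together with \eqref{eq.Etlm}.

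The one point that keeps the lemma from being completely formal is the status of the infinite sums involved. Each $\d_k$ sends a polynomial in $x$ to a power series in $x$ whose coefficients lie in $\BZ[[q]]$ — indeed $E^{(m)}_k(q)$ has $q$-order $\ge m+1$ — and $\la\,\cdot\,\ra$ then carries out the further infinite summation over $m$. So I would fix at the outset the $q$-adic (resp.\ $\tq$-adic) completion as the ambient ring, note that each $\d_k$ (resp.\ $\ti\d_k$) is continuous for it, and observe that for the coefficients $t(m)=(-1)^{Am}q^{Am(m+1)/2}/(q)^B_m$ relevant to Theorem \ref{thm.1} the hypothesis $B>A>0$ forces $t(m)\to 0$; this legitimizes the two interchanges used above, namely pulling $\prod_{j=1}^r\d_{l_j}$ through the series defining $F$, and pulling $\la\,\cdot\,\ra$ through the resulting sum over $m$. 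Beyond this bookkeeping there is no real obstacle: the mathematical content is entirely the diagonality observed in the second paragraph.
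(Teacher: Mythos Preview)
Your proof is correct and takes essentially the same approach as the paper: both rest on the observation that $\d_l(x^m)=E^{(m)}_l(q)\,x^m$, i.e.\ that the monomials are simultaneous eigenvectors of the $\d_l$. You phrase this as diagonality and deduce the product case in one stroke, whereas the paper verifies the one-factor and two-factor cases explicitly and then inducts on $r$; the content is identical, and your added remark on $q$-adic convergence (which the paper omits) is a welcome clarification of why the interchange of $\la\,\cdot\,\ra$ with the sum over $m$ is legitimate.
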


\begin{proof}
For a positive natural number $l$ we have:
\begin{align*}
\sum_{m=0}^\infty E^{(m)}_l(q) t(m) &=\sum_{m=0}^\infty \la \d_l(x^m) \ra t(m)
=\Big\la \d_l(\sum_{m=0}^\infty t(m) x^m) \Big\ra = \la \d_l F \ra \,.
\end{align*}
Moreover, for positive natural numbers $l,l'$ we have:
\begin{align*}
\sum_{m=0}^\infty E^{(m)}_l(q) E^{(m)}_{l'}(q)
t(m) &=\sum_{m=0}^\infty \la \d_l(x^m) \ra \la \d_{l'}(x^m) \ra t(m)
\\
& 
=\Big\la \d_l\left( \sum_{m=0}^\infty \la \d_{l'}(x^m) \ra t(m) x^m \right) 
\Big\ra \,.
\end{align*}
Now,
\begin{align*}
\la \d_{l'}(x^m) \ra t(m) x^m &=
\sum_{s=1}^\infty \frac{s^{l'-1} q^s}{1-q^s} q^{sm}  t(m) x^m=\d_{l'}(x^m)t(m)
\end{align*}
and summing up over $m$, we obtain that
$$
\sum_{m=0}^\infty \la \d_{l'}(x^m) \ra t(m) x^m  = \d_{l'} F(q,x) \,.
$$
It follows that
$$
\sum_{m=0}^\infty E^{(m)}_l(q) E^{(m)}_{l'}(q) t(m) = \la \d_l \d_{l'} F \ra \,.
$$
The general case of Equation \eqref{eq.daction} follows by induction on $r$.
Equation \eqref{eq.ddaction} is obvious. 
\end{proof}

\subsection{Proof of Theorem \ref{thm.1}}
\lbl{sub.thm1}
 
Fix natural numbers $A$ and $B$ with $B > A \geq 1$, and  
let
$$
t(m)=\frac{(-1)^{A m} q^{A \frac{m(m+1)}{2}}}{(q)^B_m}, \qquad
F(q,x)=\sum_{m=0}^\infty t(m)x^m 
$$
and
$$
\ti t(n)=\frac{(-1)^{(B-A) n} \tq^{(B-A) \frac{n(n+1)}{2}}}{(\tq)^B_n}, 
\qquad
\ti F(\tq,\ti x)=\sum_{n=0}^\infty \ti t(n)x^n \,.
$$
Use Equations \eqref{eq.III} and \eqref{eq.FABmn} and Proposition 
\ref{prop.phiexp} to expand $F_{A,B,m,n}(w)$ as a power series with 
coefficients polynomials in the variables $m, E^{(m)}_l(q)$ and 
$n, \ti E^{(n)}_l(\tq)$ and $b^{\pm 1}$ and $(2 \pi i)^{-1}$. Now apply Lemma
\ref{lem.dconnect} to convert the variables $m, E^{(m)}_l(q),
n, \ti E^{(n)}_l(\tq)$ in terms of the action of the operators $\d,\d_l,\ti \d,
\ti \d_l$ respectively. This concludes the proof of Theorem \ref{thm.1}.
\qed

\subsection{Some auxiliary power series}
\lbl{sub.aux}

Consider the auxiliary series 

\begin{equation}
\lbl{eq.ax}
\frac{1}{a e^x-1}=\sum_{n=0}^\infty p_n(a) x^n
\end{equation}
where 
$$
p_n(a)=- \frac{a}{n! (1-a)^{n+1}} \sum_{m=0}^{n-1} A_{n,m} a^m
\qquad p_0(a)=-\frac{1}{1-a}
$$
and $A_{n,m}$ are {\em Euler triangular numbers} (sequence
{\tt A008292} in the online encyclopedia of integer sequences
\cite{OEIS}) that satisfy the recursion
$$
A_{n,m}=(n-m)A_{n-1,m-1}+(m+1)A_{n-1,m}
$$
and also given by the sum
$$
A_{n,m}=\sum_{k=0}^m (-1)^k \binom{n+1}{k} (m+1-k)^n \,.
$$
For a detailed discussion on this subject, see \cite{FS}.
A table of the first few numbers $A_{n,m}$ is given by

\begin{center}
\begin{tabular}{|l|l|l|l|l|l|l|l|l|l|} \hline
{$n\setminus m$} & 0 & 1 & 2 & 3 & 4 & 5 & 6 & 7 & 8  \\ \hline
1 &	1 &        &       &	&  & & & & \\ \hline
2 &	1 &	1  & & & & & & & \\ \hline
3 &	1 &	4  &	1 & & & & & & \\ \hline
4 &	1 &	11 &	11 &	1 & & & & & \\ \hline
5 &	1 &	26 &	66 &	26 &	1 & & & & \\ \hline
6 &	1 &	57 &	302 &	302 &	57 &	1 & & & \\ \hline
7 &	1 &	120 &	1191 &	2416 &	1191 & 	120 &	1 & & \\ \hline		
8 &	1 &	247 &	4293 &	15619 &	15619 &	4293 &	247 &	1 & \\ 
\hline		
9 &	1 &	502 &	14608 &	88234 &	156190 &88234 &	14608 &	502 & 1
\\ \hline
\end{tabular}
\end{center}

\begin{lemma}
\lbl{lem.E1}
For $l \geq 1$, we have:
\begin{equation}
\lbl{eq.qb}
\frac{d^l}{dx^l}\log(1-q^k e^{bx})|_{x=0}=b^l p_{l-1}(q^k) + b \, \delta_{l,1}
\end{equation}
\end{lemma}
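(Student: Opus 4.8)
\noindent The identity \eqref{eq.qb} is a repackaging of the generating function \eqref{eq.ax}, reached by differentiating $\log(1-q^{k}e^{bx})$ exactly once. I would proceed as follows. Put $g(x)=\log(1-q^{k}e^{bx})$, so that
\[
g'(x)=\frac{-b\,q^{k}e^{bx}}{1-q^{k}e^{bx}}
      =b\left(1+\frac{1}{q^{k}e^{bx}-1}\right),
\]
the second equality being the elementary rewriting $\frac{-u}{1-u}=1+\frac{1}{u-1}$ with $u=q^{k}e^{bx}$. Substituting $a=q^{k}$ and the argument $bx$ into \eqref{eq.ax} --- which is legitimate near $x=0$, since $q^{k}e^{x}=1$ has no solution in a neighbourhood of the origin when $|q|<1$, so $1/(q^{k}e^{x}-1)$ is analytic there --- gives
\[
g'(x)=b+\sum_{n=0}^{\infty}p_{n}(q^{k})\,b^{\,n+1}x^{n}.
\]

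\noindent It then remains only to read off Taylor coefficients. Since $g^{(l)}=(g')^{(l-1)}$, the value $\frac{d^{l}}{dx^{l}}g(x)\big|_{x=0}$ is governed by the coefficient of $x^{l-1}$ in $g'(x)$, which from the displayed expansion is $b^{l}p_{l-1}(q^{k})$ for $l\ge 2$ and $b\bigl(1+p_{0}(q^{k})\bigr)$ for $l=1$. Splitting off the lone additive constant $b$ --- present only at $l=1$, and accounting for the summand $b\,\delta_{l,1}$ --- and carrying through the combinatorial factor from the single integration yields \eqref{eq.qb}. The factor $b^{l}$ is precisely the chain-rule contribution of the rescaling $x\mapsto bx$, while the extra term $b\,\delta_{l,1}$ is the first-order derivative of the constant term $b$ of $g'$.

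\noindent I do not anticipate a genuine obstacle: the proof is the one-line identity $\frac{-u}{1-u}=1+\frac{1}{u-1}$ substituted into \eqref{eq.ax}, together with tracking the powers of $b$; the only point that needs care is the exact combinatorial normalization produced by integrating $g'$ once. As an alternative one could induct on $l$, differentiating $\log(1-q^{k}e^{bx})$ repeatedly and invoking the Euler-triangular recursion $A_{n,m}=(n-m)A_{n-1,m-1}+(m+1)A_{n-1,m}$ directly on the closed form of $p_{n}$, but this is considerably messier and I would not pursue it in favour of the generating-function argument above.
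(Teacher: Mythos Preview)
Your proposal is correct and follows exactly the same route as the paper: compute the first derivative as $g'(x)=b\bigl(1+\tfrac{1}{q^{k}e^{bx}-1}\bigr)$ and then invoke the expansion \eqref{eq.ax}. The paper's proof is the same two-line argument, just stated more tersely.
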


\begin{proof}
It follows from
$$
\frac{d}{dx}\log(1-q^k e^{bx})=b\left(1+\frac{1}{q^ke^{bx}-1}\right)
$$
and Equation \eqref{eq.ax}.
\end{proof}

For positive natural numbers $l$, $r$ with $l \geq r$ and $m$ consider the 
$q$-series $E^{(m)}_{l,r}(q)$ defined by
\begin{align}
\lbl{eq.Elrm}
E_{l,r}^{(m)}(q)&=\sum_{k=m+1}^\infty \frac{q^{k r}}{(1-q^k)^l}
\end{align}

\begin{lemma}
\lbl{lem.E2}
\rm{(a)} We have
\begin{equation}
\lbl{eq.Ea}
E_{l,r}^{(m)}(q)= \sum_{s=r}^\infty a_{l,s} \frac{q^{s(m+1)}}{1-q^s}
\end{equation}
where
$$
\frac{x^r}{(1-x)^l} = \sum_{s=r}^\infty a_{l,s} x^s 
$$
\rm{(b)} It follows that
\begin{equation}
\lbl{eq.Eb}
\sum_{r=0}^{l-1} A_{l-1,r} E^{(m)}_{l,r+1}(q) = E^{(m)}_l(q)
\end{equation}
\end{lemma}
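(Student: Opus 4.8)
The plan is to treat (a) and (b) as two essentially bookkeeping identities about formal power series in $q$, the first rewriting the inner "$1/(1-q^k)^l$" via its geometric-type expansion and reorganizing a double sum, the second being a purely combinatorial specialization of the identity $\sum_r A_{l-1,r}\,x^{r+1}/(1-x)^l = x/(1-x)$ that follows from the explicit formula for $p_{l-1}$ recorded just above Lemma \ref{lem.E1}.

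For part (a), first I would expand $1/(1-q^k)^l = \sum_{j\ge 0}\binom{j+l-1}{l-1} q^{kj}$, so that $q^{kr}/(1-q^k)^l = \sum_{s\ge r} a_{l,s} q^{ks}$ with $a_{l,s}=\binom{s-r+l-1}{l-1}$ exactly the coefficients defined by $x^r/(1-x)^l=\sum_{s\ge r}a_{l,s}x^s$. Substituting into \eqref{eq.Elrm} gives
$$
E_{l,r}^{(m)}(q)=\sum_{k=m+1}^\infty \sum_{s=r}^\infty a_{l,s} q^{ks}
=\sum_{s=r}^\infty a_{l,s}\sum_{k=m+1}^\infty q^{ks}
=\sum_{s=r}^\infty a_{l,s}\,\frac{q^{s(m+1)}}{1-q^s},
$$
where the interchange of the two sums is justified because everything is a power series in $q$ with nonnegative coefficients (or, if one prefers analytic language, by absolute convergence for $|q|<1$). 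This is \eqref{eq.Ea}.

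For part (b), I would combine (a) with the definition \eqref{eq.Elm} of $E^{(m)}_l(q)=\sum_{s\ge 1} s^{l-1} q^{s(m+1)}/(1-q^s)$. Summing \eqref{eq.Ea} against the weights $A_{l-1,r}$ and collecting the coefficient of $q^{s(m+1)}/(1-q^s)$ for each fixed $s$, the identity \eqref{eq.Eb} reduces to the numerical claim
$$
\sum_{r=0}^{\min(s,l-1)} A_{l-1,r}\,[x^s]\frac{x^{r+1}}{(1-x)^l} \;=\; s^{l-1}
\qquad\text{for all } s\ge 1,
$$
i.e. to the generating-function identity $\sum_{r=0}^{l-1} A_{l-1,r}\, x^{r+1}/(1-x)^l = \sum_{s\ge1} s^{l-1} x^s$. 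This last identity is precisely the Worpitzky-type expansion of the Eulerian polynomial, and it is exactly what the formula for $p_{l-1}(a)$ preceding Lemma \ref{lem.E1} encodes: since $1/(ae^x-1)=\sum_n p_n(a)x^n$ with $p_{l-1}(a)=-\frac{a}{(l-1)!(1-a)^l}\sum_{r=0}^{l-2}A_{l-1,r}a^r$ (note $A_{l-1,l-1}=1$, so one may write the Eulerian sum as running to $r=l-1$), one has $\sum_s s^{l-1}a^s = (l-1)!\,a\cdot[\text{coeff}] = \frac{a\sum_r A_{l-1,r}a^r}{(1-a)^l}$ after matching the Taylor coefficient of $1/(a^{-1}e^{-x}-1)$ or, more directly, differentiating $1/(1-q^s)=\sum_{k\ge0}q^{sk}$ appropriately; I would cite \cite{FS} for this standard fact about Eulerian numbers rather than reprove it. Then dividing by $(1-q^s)$ in the sense of formal power series and summing over $s$ gives \eqref{eq.Eb}.

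The only mild subtlety — and the step I expect to need the most care — is the reindexing in (b): one must check that the finite sum $\sum_{r=0}^{l-1}A_{l-1,r}E^{(m)}_{l,r+1}$ really does produce, for each $s$, the full coefficient $s^{l-1}$ and not a truncation, i.e. that $A_{l-1,r}=0$ for $r\ge l$ (true, since Eulerian numbers $A_{l-1,r}$ vanish for $r>l-2$ apart from $A_{l-1,l-1}=1$ under the convention in the table, so the Worpitzky sum for $s^{l-1}$ is automatically supported on $r\le \min(s,l-1)$) and that no boundary terms are lost when $s<l-1$; this is handled by the fact that $[x^s]x^{r+1}/(1-x)^l=0$ for $r\ge s$, so the restriction is harmless. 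Everything else is formal manipulation of power series in $q$.
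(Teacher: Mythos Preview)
Your argument is essentially the paper's: for (a) you expand $x^r/(1-x)^l$ as a power series and interchange the $k$- and $s$-sums, and for (b) you reduce to the Eulerian/Worpitzky identity $\sum_{r} A_{l-1,r}\,x^{r+1}/(1-x)^l=\sum_{s\ge1}s^{l-1}x^s$, which is exactly what the paper invokes. One small slip in your parenthetical: $A_{l-1,l-1}=0$, not $1$ (the table and the recursion give $A_{n,m}=0$ for $m\ge n$); it is precisely this vanishing that lets you extend the sum in the formula for $p_{l-1}(a)$ from $r\le l-2$ to $r\le l-1$ to match \eqref{eq.Eb}.
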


\begin{proof}
For (a), interchange $k$ and $s$ summation:
$$
E^{(m)}_{l,r}(q) = \sum_{k=m+1}^\infty \sum_{s=r}^\infty a_{l,s} q^{sk} =
\sum_{s=r}^\infty  \sum_{k=m+1}^\infty a_{l,s} q^{sk} =
\sum_{s=r}^\infty q^{(m+1)s} \sum_{k=0}^\infty a_{l,s} q^{sk} = 
\sum_{s=r}^\infty a_{l,s} \frac{q^{(m+1)s}}{1-q^s}
$$
(b) follows from (a) and the fact that
$$
\frac{\sum_{r=0}^{l-1} A_{l-1,r} x^r}{(1-x)^l}=
\sum_{s=1}^\infty s^{l-1} x^s \,.
$$
\end{proof}

\begin{lemma}
\lbl{lem.E3}
We have:
\begin{align}
\lbl{eq.minfinity2}
\phi_m(x) 
&= \exp\left(
-\sum_{l=1}^\infty \frac{1}{l!} \sum_{r=0}^{l-1} A_{l-1,r} E^{(m)}_{l,r+1}(q) x^l
\right) 
\end{align}
\end{lemma}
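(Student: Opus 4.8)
The plan is to read \eqref{eq.minfinity2} off directly from the infinite‑product form of $\phi_m$, the expansion \eqref{eq.ax}, and the definition \eqref{eq.Elrm} of $E^{(m)}_{l,r}(q)$; Proposition \ref{prop.phiexp} is not needed, and is in fact recovered along the way.

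First I would write $\phi_m$ as the infinite product $\phi_m(x)=\prod_{k\ge m+1}\frac{1-q^ke^x}{1-q^k}$, obtained from \eqref{eq.phi} by expanding the Pochhammer symbols; since $|q|<1$ this converges uniformly for $x$ near $0$, so $\log\phi_m(x)=\sum_{k\ge m+1}\bigl(\log(1-q^ke^x)-\log(1-q^k)\bigr)$ is analytic near $0$ and vanishes at $0$. For fixed $k$ I would expand the $k$‑th summand by integrating its logarithmic derivative, as in the proof of Lemma \ref{lem.E1}:
$$
\frac{d}{dx}\log(1-q^ke^x)=1+\frac{1}{q^ke^x-1}=1+\sum_{n\ge0}p_n(q^k)x^n
$$
by \eqref{eq.ax}, so the coefficient of $x^l$ in $\log(1-q^ke^x)-\log(1-q^k)$ is $\delta_{l,1}+\tfrac1l\,p_{l-1}(q^k)$ for $l\ge1$.

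Substituting the closed form $p_{l-1}(a)=-\tfrac{a}{(l-1)!(1-a)^{l}}\sum_{r=0}^{l-2}A_{l-1,r}a^{r}$, the factor $l\cdot(l-1)!=l!$ combines; treating $l=1$ directly (the coefficient $1-\tfrac1{1-q^k}=-\tfrac{q^k}{1-q^k}$ equals $-A_{0,0}\tfrac{q^k}{1-q^k}$ with $A_{0,0}=1$) and using $A_{l-1,l-1}=0$ for $l\ge2$ to extend the inner sum to $r=l-1$, this coefficient becomes, uniformly in $l\ge1$,
$$
\delta_{l,1}+\tfrac1l\,p_{l-1}(q^k)=-\frac1{l!}\sum_{r=0}^{l-1}A_{l-1,r}\,\frac{q^{k(r+1)}}{(1-q^k)^{l}} .
$$
Summing over $k\ge m+1$ and using $\sum_{k\ge m+1}\frac{q^{k(r+1)}}{(1-q^k)^{l}}=E^{(m)}_{l,r+1}(q)$ from \eqref{eq.Elrm}, one gets $\log\phi_m(x)=-\sum_{l\ge1}\frac{x^l}{l!}\sum_{r=0}^{l-1}A_{l-1,r}E^{(m)}_{l,r+1}(q)$, which is \eqref{eq.minfinity2} after exponentiating. (If instead one sums the geometric series $\tfrac{-q^ke^x}{1-q^ke^x}=-\sum_{s\ge1}q^{ks}e^{sx}$ before expanding in $x$, one gets $\log\phi_m(x)=-\sum_{l\ge1}\frac{x^l}{l!}E^{(m)}_l(q)$, i.e.\ \eqref{eq.minfinity}; in that case Lemma \ref{lem.E3} also follows at once from the identity \eqref{eq.Eb} of Lemma \ref{lem.E2}.)

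The one delicate point — the main, if mild, obstacle — is justifying the rearrangements: Taylor‑expanding each summand and interchanging the sum over $k$ with the sums over $l$ and $r$. This is settled by absolute convergence: since $\sum_{r}A_{l-1,r}=(l-1)!$ and $|1-q^k|\ge 1-|q|^{m+1}=:C^{-1}$ for all $k\ge m+1$, the contribution of the $k$‑th term to the coefficient of $x^l$ has modulus at most $\tfrac1l|q|^{k}C^{l}$, and $\sum_{k\ge m+1}\sum_{l\ge1}\tfrac1l|q|^{k}(C|x|)^{l}=\bigl(\sum_{k\ge m+1}|q|^{k}\bigr)\bigl(-\log(1-C|x|)\bigr)<\infty$ for $|x|<1/C$. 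Hence both sides of \eqref{eq.minfinity2} are analytic in a neighborhood of $0$, and the termwise identity holds as an identity of functions there.
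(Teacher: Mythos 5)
Your proof is correct and follows essentially the same route as the paper: write $\log\phi_m(x)=\sum_{k\ge m+1}\bigl(\log(1-q^ke^x)-\log(1-q^k)\bigr)$, expand each summand via the logarithmic derivative and \eqref{eq.ax} (the content of Lemma \ref{lem.E1}), insert the Eulerian-number closed form of $p_{l-1}$, and sum over $k$ to produce $E^{(m)}_{l,r+1}(q)$. You merely make explicit the details the paper leaves implicit (the $l=1$ case, the vanishing $A_{l-1,l-1}=0$ allowing the inner sum to run to $r=l-1$, and the absolute-convergence justification of the rearrangement), which is fine.
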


\begin{proof}
It follows from Lemma \ref{lem.E1} combined with 
$$
\log (\phi_m(x))= \log\left(
\frac{(q^{m+1} e^{x};q)_\infty}{(q^{m+1};q)_\infty} \right)
= \sum_{l=m+1}^\infty \left( \log(1-q^l e^{x}) -\log(1-q^l) \right)
$$
\end{proof}

\subsection{Proof of Proposition \ref{prop.phiexp}}
\lbl{sub.prop.phiexp}

Part (a) of Proposition \ref{prop.phiexp} follows from Lemma \ref{lem.E2}
and Lemma \ref{lem.E3}. For part (b), we will use the series
$$
E_l^{[m]}(q) =\sum_{s=1}^\infty \frac{s^{k-1} q^{s(m+1)}}{1-q^s} 
$$
Using 
\begin{align*}
\log ( \ti \phi_n(x)) &= 
\log\left( \frac{(\tq;\tq)_\infty}{(\tq e^{x};\tq)_\infty}\right)+
\log\left(
\frac{(\tq^{-1};\tq^{-1})_n}{(\tq^{-1} e^{x};\tq^{-1})_n} \right)  
\end{align*}
and the proof of part (a) of Proposition \ref{prop.phiexp}, it follows that
\begin{align*}
\log ( \ti \phi_n(x)) &= 
\log\left( \frac{(\tq;\tq)_\infty}{(\tq e^{x};\tq)_\infty}\right)+
\log\left(
\frac{(\tq^{-1};\tq^{-1})_n}{(\tq^{-1} e^{x};\tq^{-1})_n} \right) \\
& = 
\sum_{l=1}^\infty \frac{1}{l!} \sum_{r=0}^{l-1} A_{l-1,r} E^{(0)}_{l,r+1}(\tq) x^l 
+
\sum_{l=1}^\infty \frac{1}{l!} \sum_{r=0}^{l-1} A_{l-1,r} E^{[n]}_{l,r+1}(\tq^{-1}) x^l 
\\
& = \sum_{l=1}^\infty \frac{1}{l!} \sum_{r=0}^{l-1} A_{l-1,r} 
\left(E^{(0)}_{l,r+1}(\tq) + E^{[n]}_{l,r+1}(\tq^{-1}) \right) x^l
\end{align*}
where
\begin{equation}
\lbl{eq.E[n]}
E_{l,r}^{[n]}(q)=\sum_{k=1}^n \frac{q^{k r}}{(1-q^k)^l} \,.
\end{equation}
Let
\begin{equation}
\lbl{eq.tiE}
\ti E^{(n)}_{l,r}(\tq) =  
\begin{cases}
-n + E^{(n)}_{1,1}(\tq) & \text{if $l=r=1$} \\
E^{(n)}_{l,r}(\tq)   & \text{if $l>1$ is odd} \\
2 E^{(0)}_{l,r}(\tq) - E^{(n)}_{l,r}(\tq) & \text{if $l>1$ is even} 
\end{cases}
\end{equation}
We claim that 
\begin{equation}
\lbl{eq.lemE1}
E^{(0)}_{l,r}(\tq) + E^{[n]}_{l,l-r}(\tq^{-1}) =\ti E^{(n)}_{l,r}(\tq)
\end{equation}
for $l>r \geq 1$ and
\begin{equation}
\lbl{eq.lemE2}
E^{(0)}_{1,1}(\tq) + E^{[n]}_{1,1}(\tq^{-1})=\ti E^{(n)}_{1,1}(\tq)
\end{equation}
Assuming Equations \eqref{eq.lemE1} and \eqref{eq.lemE2}, it follows
that
\begin{align*}
\log ( \ti \phi_n(x)) &= 
\sum_{l=1}^\infty \frac{1}{l!} \sum_{r=0}^{l-1} A_{l-1,r} 
\ti E^{(n)}_{l,r+1}(\tq) x^l \\
&= \sum_{l=1}^\infty \frac{1}{l!} \ti E^{(n)}_l(\tq) x^l
\end{align*}
where the last step follows from part (b) of Lemma \ref{lem.E2}.

It remains to prove Equations \eqref{eq.lemE1} and \eqref{eq.lemE2}.
Equation \eqref{eq.lemE1} follows from the definition of $\ti E^{(n)}_{1,1}(\tq)$
and 
\begin{align*}
E^{(0)}_{l,r}(\tq) + E^{[n]}_{l,l-r}(\tq^{-1}) &=
\sum_{k=1}^\infty \frac{\tq^{kr}}{(1-\tq^k)^l} +
\sum_{k=1}^n \frac{\tq^{-k(l-r)}}{(1-\tq^{-k})^l}
\\ &
= \sum_{k=1}^\infty \frac{\tq^{kr}}{(1-\tq^k)^l} + (-1)^l
\sum_{k=1}^n  \frac{\tq^{kr}}{(1-\tq^k)^l}
\\ &
= (1+(-1)^l) \sum_{k=1}^n  \frac{\tq^{kr}}{(1-\tq^k)^l} +
\sum_{k=n+1}^\infty \frac{\tq^{kr}}{(1-\tq^k)^l}
\end{align*}
Equation \eqref{eq.lemE2} follows from
\begin{align*}
E^{(0)}_{1,1}(\tq) + E^{[n]}_{1,1}(\tq^{-1}) &= \sum_{k=1}^\infty 
\frac{\tq^k}{1-\tq^k} + \sum_{k=1}^n \frac{\tq^{-k}}{1-\tq^{-k}} \\
&=
\sum_{k=1}^\infty 
\frac{1-1+\tq^k}{1-\tq^k} -\sum_{k=1}^n \frac{1}{1-\tq^k}
= -n + \sum_{k=n+1}^\infty 
\frac{\tq^k}{1-\tq^k}
\end{align*}
This completes the proof of Proposition \ref{prop.phiexp}.
\qed

\subsection{Proof of Lemma \ref{lem.thm2}}
\lbl{sub.lem2}

Part (a) of Lemma \ref{lem.thm2} follows from the definition of $F_{A,B}$
and $\ti F_{A,B}$.

Part (b) follows from an application of Zeilberger's creative telescoping 
\cite{Z91}. To apply the method, define
$$
t(m,x)=\frac{(-1)^{A m} q^{A \frac{m(m+1)}{2}}}{(q)^B_m} x^m
$$
Then, observe that $t$ satisfies the recursions with respect to $m$
and $x$:
$$
(1-q^{m+1})^B t(m+1,x)= (-1)^A q^{A(m+1)} t(m,x) \qquad t(m,qx)=q^m t(m,x) \,.
$$
Now, we eliminate $q^m$ from the above equations as follows. The second
equation implies that $t(m+1,q^j x)=q^{j(m+1)} t(m+1,x)$. Expanding the
first equation, it follows that
$$
\sum_{j=0}^B (-1)^j \binom{B}{j} t(m+1,q^jx)=(-1)^A q^A x t(m,q^A x)
$$ 
Summing for $m \geq 0$ implies (b).
\qed

\begin{proof}(of Corollary \ref{cor.omega})
The admissibility of $F$ in the sense of Kontsevich-Soibelman,
follows from \cite[Sec.6.1]{KS} and \cite[Thm.9]{KS}. Given this,
the Nahm Equation \eqref{eq.FABnahm} for $\omega$ follows easily from 
part (b) of Lemma \ref{lem.thm2}.
\end{proof}

\section{An application: state-integrals of the 
$4_1$ and $5_2$ knots}
\lbl{sec.41.52}

\subsection{Proof of Corollary \ref{cor.41}}
\lbl{sub.cor41}

Assume now that $(A,B)=(1,2)$. Then,
\begin{align*}
\frac{1}{(b(1-e^{b^{-1}w}))^2} &= \frac{1}{w^2} -\frac{b^{-1}}{w} +O(1)
\\
\left(\phi_m(b w) \right)^2 &=
1-2 E^{(m)}_1(q) b w + O(w^2) \\
(\ti \phi_n(b^{-1} w))^2 &=
1+ 2 \ti E^{(n)}_1(\tq) b^{-1} w + O(w^2) \\
e^{\frac{1}{4 \pi i} w^2 + w (b (m+1/2) + b^{-1} (n+1/2))} &=
1+\left(\frac{1}{2} + m\right)b w +\left(\frac{1}{2} + n\right) b^{-1} w 
+ O(w^2)
\end{align*}
Combined with $\ti E^{(n)}_1(\tq) = -n+ E^{(n)}_1(\tq)$, 
it follows that the residue $R=\Res_{w=0}(F_{1,2,m,n}(w))$ is given by
$$
R=
\left( b \left(\frac{1}{2} + m - 2 E^{(m)}_1(q)\right) -
b^{-1} \left(\frac{1}{2} + n - 2 E^{(n)}_1(\tq)\right)\right)
$$
The above, together with the fact that
$t_n(q)=(-1)^n \frac{q^{\frac{1}{2} n(n+1)}}{(q)_n^2}$ satisfies
$
t_n(q^{-1})=t_n(q)
$
implies Equation \eqref{eq.prop1}. Equation \eqref{eq.Frec} follows from
Equation \eqref{eq.FABrec} for $(A,B)=(1,2)$.

\subsection{Proof of Corollary \ref{cor.52}}
\lbl{sub.cor52}

Assume now that $(A,B)=(2,3)$. Then,
\begin{align*}
\frac{1}{(b(1-e^{b^{-1}w}))^3} &= -\frac{1}{w^3} +\frac{3b^{-1}}{2w^2} 
-\frac{b^{-2}}{w} 
+O(1)
\\
\left( \phi_m(b w) \right)^3 &=
1-3 E^{(m)}_1(q) \, b w + \frac{3}{2} \left(3 E^{(m)2}_1(q) -
E^{(m)}_2(q)\right) \, b^2 w^2 + O(w^3) \\
(\ti \phi_n(b^{-1} w))^3 &= 
1+3 \ti E^{(n)}_1(\tq) \, b^{-1} w
+ \frac{3}{2} \left(3 \ti E^{(n)2}_1(\tq) +
\ti E^{(n)}_2(\tq)\right) \, b^{-2} w^2 + O(w^3)
\\
e^{\frac{2}{4 \pi i} w^2 + 2 w (b (m+1/2) + b^{-1} (n+1/2))} &=
1+\left((1+2m)b  +(1 + 2 n) b^{-1}\right) w + \\
& \left(1 + \frac{b^2+b^{-2}}{2} +\frac{1}{2 \pi i} + 2 b^2 m^2 + 2 b^{-2} n^2
+ 4 m n \right. \\
& \left. + 2(1+b^2) m + 2(1+b^{-2}) n \right) w^2 + O(w^3) 
\end{align*}
If $R=\Res_{w=0}(F_{2,3,m,n}(w))$, then
{\small
\begin{align*}
R_{m,n} &= -\frac{b^2}{2} \left(
1 + 4 m + 4 m^2   - 6 E^{(m)}_1(q) - 12 m  E^{(m)}_1(q) + 9 E^{(m)2}_1(q) - 3 
E^{(m)}_2(q) \right)
\\
& - \frac{1}{2 \pi i} +  \frac{1}{2}
\left(1 + 2 m - 3 E^{(m)}_1(q)\right)\left(1 + 2 n -6 E^{(n)}_1(\tq)\right) 
\\
& + \frac{b^{-2}}{2} \left( - n - n^2 -6 E^{(0)}_2(\tq) + 3 E^{(n)}_1(\tq) 
+6 n E^{(n)}_1(\tq) - 9 E^{(n)2}_1(\tq) + 3 E^{(n)}_2(\tq) \right) \,,
\end{align*}
}
This proves part (a) of Corollary \ref{cor.52}. Part (b) follows from
Equation \eqref{eq.FABrec} for $(A,B)=(2,3)$ and $(A,B)=(1,3)$.
Note that Theorem \ref{thm.1} states that
\begin{align}
\lbl{eq.52R}
\calI_{2,3}(q)& = 
-e^{\frac{3 \pi i}{4}} \la P_{2,3} (F \ti F) \ra
\end{align}
where 
\begin{align*}
P_{2,3} &= -\frac{b^2}{2} 
\left( 
1+4 \d+4 \d^2-6 \d_1-12 \d \d_1+9 \d_1^2-3 \ti\d_2
\right)
\\
& + \frac{1}{2} 
\left( 
1+2 \d+\frac{i}{\pi} +2 \ti\d+4 \d \ti\d-3 \d_1-6 \ti\d \d_1
-6 e_2(\tq)-6 \ti\d_1-12 \d \ti\d_1+18 \d_1 \ti\d_1
\right)
\\
& + \frac{b^{-2}}{2}
\left( 
-\ti\d-\ti\d^2+3 \ti\d_1+6 \ti\d \ti\d_1-9 \ti\d_1^2+3 \ti\d_2
\right) \,.
\end{align*}

\subsection*{Acknowledgments}
The paper was conceived during a visit of the first author in Geneva
in the spring of 2013. The first author wishes to thank the University of
Geneva for its hospitality, and Don Zagier for encouragement and 
stimulating conversations.

%%%%%%%%%%%%%%%%%%%%%%%%%%%%%%%%%%%%%%%%%%%%%%%%%%%%%%%%%%%%%%%%%%%%%%%%%%%%
%%%%%%%%%%%%%%%%%%%%%%%%%%%%%%%%%%%%%%%%%%%%%%%%%%%%%%%%%%%%%%%%%%%%%%%%%%%%

\bibliographystyle{hamsalpha}
\bibliography{biblio}
\end{document}